\newtheorem{theo}{Theorem}[section]
\newtheorem{prop}[theo]{Proposition}
\newtheorem{lemma}[theo]{Lemma}
\newtheorem{defn}[theo]{Definition}
\newtheorem{rem}[theo]{Remark}
\newtheorem{obs}[theo]{Observation}
\begin{document}
\title{On knots having zero negative unknotting number}
\author{Yuanyuan Bao}
\address{
Graduate school of Mathematics,
Tokyo Institute of Technology,
Oh-okayama, Meguro, Tokyo 152-8551, Japan
}
\email{bao@ms.u-tokyo.ac.jp}
\date{}
\begin{abstract}
A knot in the 3-sphere is said to have zero negative unknotting number if it can be transformed into the unknot by performing only positive crossing changes. In this paper, we provide an obstruction for a knot to having zero negative unknotting number, and discuss its application to two classes of knots.

\end{abstract}
\keywords{negative unknotting number, Dehn surgery, Ozsv{\'a}th and Szab{\'o}'s $d$-invariant.}
\subjclass[2010]{Primary 57M27 57M25}
\thanks{The author is supported by Grant-in-Aid for JSPS Fellows.}
\maketitle

\section{Introduction}
Let $K$ be an oriented knot in the 3-sphere $S^{3}$. It is well known that $K$ can be transformed into the unknot by crossing changes, which are local moves illustrated in Figure~\ref{fig:f1}. By counting the signs of the crossing changes, we can define the set $\mathcal{U}(r,s)$ to be the set of all knots that can be unknotted by performing $r$ positive crossing changes and $s$ negative crossing changes (Cochran and Lickorish \cite{MR849471}). 
\begin{figure}[h]
	\centering
		\includegraphics[width=0.6\textwidth]{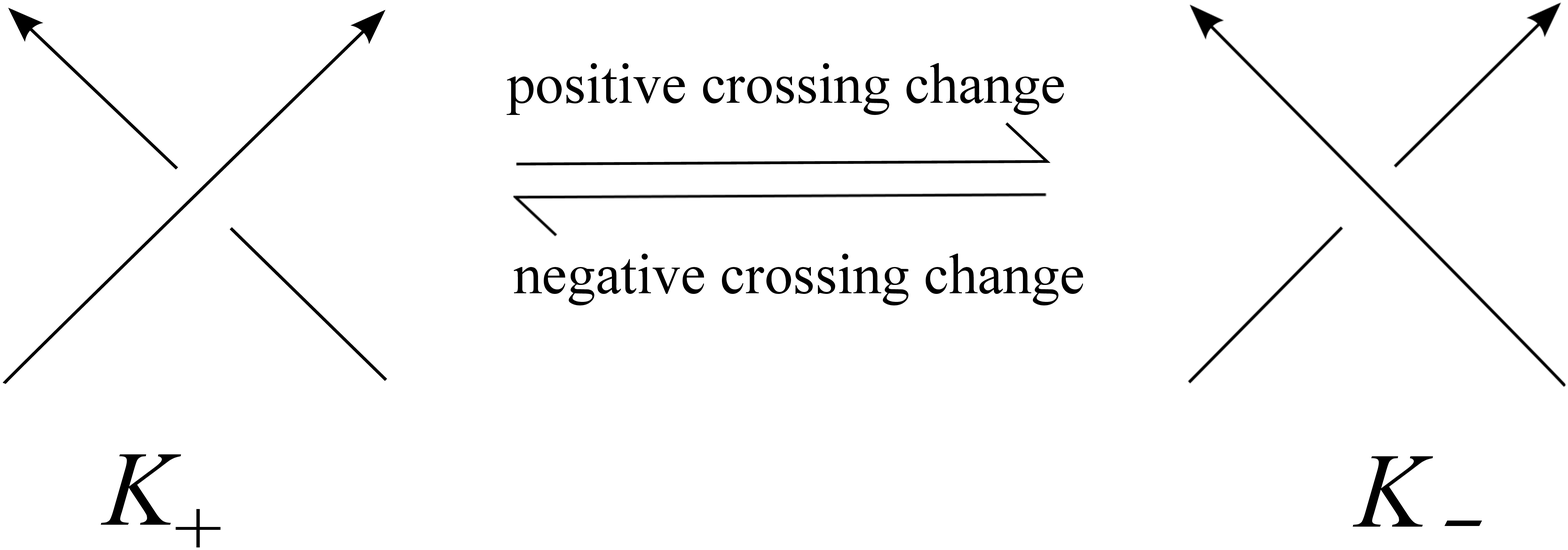}
	\caption{}
\label{fig:f1}
\end{figure}

Define $u_{-}(K)=\min\lbrace s \vert K \in \mathcal{U}(r,s)\rbrace$, and call it the \textit{negative unknotting number} of $K$. Similarly we can define the positive unknotting number $u_{+}(K)$. Sometimes we use $u_{\pm}(K)$ to denote both the case of $u_{-}(K)$ and $u_{+}(K)$. The signed unknotting number has been studied for a long time. In this paper, we provide an obstruction for a knot to having zero negative unknotting number. Before stating the result, let us review some necessary terminologies.

Given $p/q\in\mathbb{Q}\setminus\lbrace 0\rbrace$, consider a continued fraction of $p/q=[a_{1}, a_{2}, \cdots, a_{n}]$. Namely,
$$p/q=a_{1}-\cfrac{1}{a_{2}-\cfrac{1}{a_{3}-\cdots-\cfrac{1}{a_{n}}}}.$$
Let $\Omega$ be the set of $p/q\in\mathbb{Q}\setminus\lbrace 0\rbrace$ which has a continued fraction $[a_{1}, a_{2}, \cdots, a_{n}]$ such that all but at most two of the $a_{i}$ satisfy that
 $a_{i}\leq -2$ for $2 \leq i \leq n-1$ and $a_{i}\leq -1$ for $i=1, n$. In particular, any rational number $-p/q$ with $p\geq q>0$ is an element in $\Omega$. We can see this from the following arguments. Require that $p$ and $q$ are relatively prime integers. When $p=1$ the statement is true. When $p\geqslant 2$ we show that $p/q$ has a continued fraction $[b_{1}, b_{2}, \cdots, b_{n}]$ with $b_{i}\geqslant 2$ . We do induction on $q$. If $q=1$, then clearly such a continued fraction exists. Suppose now that the continued fraction exists for any rational number $r/s$ with $r>s>0$ and $q>s$. For $p/q$, there exists an integer $m\geqslant 1$ so that $mq < p < (m+1)q$. Here we have $\dfrac{p}{q}=(m+1)-\dfrac{1}{q/[(m+1)q-p]}$ with $m+1\geqslant 2$ and $q>(m+1)q-p>0$. By induction, we see that $q/[(m+1)q-p]$ has a continued fraction $[b_{1}, b_{2}, \cdots, b_{n}]$ with $b_{i}\geqslant 2$. Then $[m+1, b_{1}, b_{2}, \cdots, b_{n}]$ is a continued fraction for $p/q$. This implies that $[-(m+1), -b_{1}, -b_{2}, \cdots, -b_{n}]$ is a continued fraction for $-p/q$, which makes $-p/q \in \Omega$.

Let $Y$ be a compact oriented rational homology 3-sphere and $\mathrm{Spin}^{c}(Y)$ be the set of its spin$^{c}$-structure. Given $s\in \mathrm{Spin}^{c}(Y)$, Ozsv{\'a}th and Szab{\'o} \cite{MR1957829} defined a $\mathbb{Q}$-valued invariant $d(Y, s)$ for the pair $(Y, s)$, called correction term or $d$-invariant. We will discuss the definition of $d$-invariant and its properties in Section~2. 
Our main result is as follows.

\begin{theo}
\label{main}
Let $S^{3}_{p/q}(K)$ denote the 3-manifold obtained via Dehn surgery of $S^{3}$ along $K$ with slope $p/q$, and choose the convention that $S^{3}_{p/q}(O)\cong L(p,q)$ where $O$ stands for the unknot and $L(p,q)$ is the $(p,q)$-lens space. Then we have the following properties about the $d$-invariants for $S^{3}_{p/q}(K)$:
If $u_{-}(K)=0$, then there exists an affine isomorphism $\alpha : \mathrm{Spin}^{c}(L(p,q))\rightarrow \mathrm{Spin}^{c}(S^{3}_{p/q}(K))$ such that $d(S^{3}_{p/q}(K), \alpha(s))=d(L(p,q), s)$ for any $p/q \in \Omega$ and $s\in \mathrm{Spin}^{c}(L(p,q))$. 

\end{theo}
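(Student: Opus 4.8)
The plan is to convert the hypothesis $u_{-}(K)=0$ into the existence of a negative-definite cobordism between $S^{3}_{p/q}(K)$ and $L(p,q)$, and then to squeeze the correction terms between two inequalities of opposite sign.

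First I would realize the crossing changes as surgeries. A crossing change is performed by encircling the two relevant strands by an unknot $c$ with $\mathrm{lk}(c,K)=0$ and doing $\mp 1$-surgery on $c$; since $\mathrm{lk}(c,K)=0$ this preserves the ambient $S^{3}$ and, crucially, does not alter the surgery coefficient $p/q$ along $K$. Writing an unknotting sequence $K=K_{0}\to K_{1}\to\cdots\to K_{n}=O$ by positive crossing changes, each step is a blow-down of a $(-1)$-framed unknot $c_{i}$ lying in the complement of $K_{i-1}$ (I would pin down this sign carefully, checking it against the left/right trefoil, for which only the correct sign makes the statement consistent with the unknot having vanishing correction terms). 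Because the $c_{i}$ are disjoint from the $p/q$-surgery solid torus, the two operations commute, and attaching the corresponding two-handles produces a cobordism $W_{i}\colon S^{3}_{p/q}(K_{i-1})\to S^{3}_{p/q}(K_{i})$ whose intersection form is $(-1)$. Composing, $W=W_{1}\cup\cdots\cup W_{n}\colon S^{3}_{p/q}(K)\to L(p,q)$; since all the pieces are glued along rational homology spheres, $H_{2}(W;\mathbb{Q})$ splits as an orthogonal sum and $W$ is negative definite with $b_{2}(W)=b_{2}^{-}(W)=n$.

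Next I would feed $W$ into the correction-term inequality for negative-definite cobordisms recalled in Section~2. Applied to a $\mathrm{spin}^{c}$ structure $\mathfrak{s}$ with $c_{1}(\mathfrak{s})^{2}=-n$, it yields $d(S^{3}_{p/q}(K),\mathfrak{s}|_{S^{3}_{p/q}(K)})\le d(L(p,q),\mathfrak{s}|_{L(p,q)})$. Working with the rank-one pieces $W_{i}$ one at a time is what makes this clean: each $(-1)$-framed two-handle is automatically sharp, so the extremal $\mathfrak{s}$ of square $-1$ exists over every boundary $\mathrm{spin}^{c}$ structure, and its two restrictions define an affine isomorphism $\mathrm{Spin}^{c}(S^{3}_{p/q}(K_{i-1}))\to\mathrm{Spin}^{c}(S^{3}_{p/q}(K_{i}))$ (both manifolds have $H_{1}=\mathbb{Z}/p$ and $c_{i}$ is null-homologous). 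Composing these produces a single affine isomorphism $\alpha$ together with the termwise bound $d(S^{3}_{p/q}(K),\alpha(s))\le d(L(p,q),s)$ for every $s$. For the reverse inequality I would invoke the behaviour of the $d$-invariant under Dehn surgery: for the (essentially negative) slopes making up $\Omega$ one has $d(S^{3}_{p/q}(K),a)\ge d(L(p,q),\beta(a))$ for a suitable affine isomorphism $\beta$ and \emph{every} knot $K$, because the relevant surgery correction terms are nonnegative; this is exactly where the continued-fraction condition defining $\Omega$ (equivalently, that $L(p,q)$ bounds a sharp negative-definite linear plumbing) enters. Finally I would combine the two bounds by summing over $\mathrm{Spin}^{c}$: the upper bound gives $\sum_{s}d(S^{3}_{p/q}(K),\alpha(s))\le\sum_{s}d(L(p,q),s)$ and the lower bound gives the reverse, so the two totals coincide; together with the termwise upper bound through $\alpha$ this forces $d(S^{3}_{p/q}(K),\alpha(s))=d(L(p,q),s)$ for every $s$. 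This summation trick is what lets me avoid having to prove that $\alpha$ and $\beta$ agree.

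I expect the main obstacle to be twofold and to lie in the bookkeeping rather than in the analysis. First, I must fix orientation conventions so that a positive crossing change really does correspond to a \emph{negative}-definite handle: the two strands run anti-parallel through the encircling disk, which reverses the naive twist sign, and getting this backwards would flip the whole inequality. Second, I must verify that the maps on $\mathrm{Spin}^{c}$ induced by the rank-one cobordisms are genuinely well-defined affine isomorphisms and that the extremal square-$(-1)$ structures restrict compatibly along the entire tower. Everything else is a direct application of the inequalities collected in Section~2.
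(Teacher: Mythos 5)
Your proposal is correct, and its skeleton is the same as the paper's: an upper bound on the correction terms of $S^{3}_{p/q}(K)$ coming from the positive crossing changes, a lower bound valid for \emph{every} knot at slopes in $\Omega$, and a comparison of the two. The differences lie in how each bound is obtained and how they are combined. For the upper bound you re-derive, handle by handle, what the paper simply quotes as inequality (\ref{3}) from \cite{MR2902750}; your tower of rank-one $(-1)$-framed cobordisms is exactly the construction behind that inequality, so this part is an unpacking rather than a new route. For the lower bound the paper is more concrete than you are: it takes the linear plumbing $X(G)$ realizing a continued fraction of $p/q$ (negative definite with at most two bad vertices precisely because $p/q\in\Omega$), computes $d(L(p,q),s)$ by the sharp formula of Theorem~\ref{os}, and then observes that $S^{3}_{p/q}(K)$ bounds the 4-manifold $X'$ obtained by replacing the unknot by $K$ in the same Kirby diagram, so that inequality (\ref{1}) applied to $X'$ yields the matching lower bound. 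Your parenthetical ``equivalently, that $L(p,q)$ bounds a sharp negative-definite linear plumbing'' identifies the right key fact, but it is not by itself sufficient: one still needs a negative-definite filling of $S^{3}_{p/q}(K)$ with the same intersection form, i.e.\ the replace-$O$-by-$K$ step, which your sketch leaves implicit. (Your alternative justification, nonnegativity of the relevant surgery correction terms at negative slopes, is also true for every knot, but it rests on the rational surgery formula, a heavier tool, and it would make the set $\Omega$ irrelevant.) Finally, your summation trick for merging the two bounds is genuinely cleaner than what the paper does: the paper asserts as ``easy to see'' that the $\mathrm{Spin}^c$ identification $\varphi$ induced by the two fillings is compatible with the crossing-change identification $\alpha$, whereas summing over all $\mathrm{Spin}^c$ structures and using that a sum of nonnegative terms vanishes only if every term does lets you conclude termwise equality through $\alpha$ without ever comparing the two identifications.
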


Let $A$ (respectively $T$, $S$ and $R$) be the set of algebraically slice knots (respectively topologically slice knots, smoothly slice knots and ribbon knots) in $S^{3}$. Then they satisfy the relation $R\subseteq S \subset T \subset A$.
Referring to \cite{MR0353295}, we consider the following sets of knots. Define $u^{A}(r,s)$ (respectively $u^{T}(r,s)$, $u^{S}(r,s)$ and $u^{R}(r,s)$) to be the set of knots that can be transformed into knots in $A$ (respectively $T$, $S$ and $R$) by performing $r$ positive crossing changes and $s$ negative crossing changes. Given a knot $K$, we can define $u^{A}_{\pm}(K)$, $u^{T}_{\pm}(K)$, $u^{S}_{\pm}(K)$ and $u^{R}_{\pm}(K)$, as we did for $u_{\pm}(K)$. Then these invariants satisfy the relation $$u^{A}_{\pm}(K)\leq u^{T}_{\pm}(K) \leq u^{S}_{\pm}(K) \leq u^{R}_{\pm}(K) \leq u_{\pm}(K)$$ for any knot $K$ in $S^{3}$.

There have been many tools and invariants which can be applied to detect whether a knot has zero negative unknotting number or not. The first class of invariants we can use are signature $\sigma(\cdot)$, Rasmussen invariant $s(\cdot)$ and Ozsv{\'a}th and Szab{\'o}'s invariant $\tau(\cdot)$, which satisfy the relation $\nu (K_{-})\leq \nu (K_{+})\leq \nu (K_{-}) +2$ with $\nu$ standing for either $\sigma$, $s$ or $2\tau$. A knot $K$ with $u_{-}(K)=0$ has the property that $\nu (K)\geq 0$ (see also \cite[Corollary~3.9]{MR849471}). The signature $\sigma$, as an algebraical concordance invariant, does not distinguish $u_{\pm}$ from $u^{A}_{\pm}$, while $s$ and $\tau$, as smooth concordance invariants, do not tell the difference between $u_{\pm}$ and $u^{S}_{\pm}$. Another useful tool is Donaldson's diagonalization theorem, which was applied by Cochran and Lickorish \cite{MR849471} to show that the untwisted Whiteheand double of a class of knots have non-zero negative unknotting numbers. Some ingredients from Heegaard Floer homology were applied as well recently. Let $\Sigma (K)$ denote the double-branched cover of $S^{3}$ along $K$. Owens \cite{MR2388097, owens} used the $d$-invariants of $\Sigma (K)$ to study the signed unknotting number of $K$. These techniques do not distinguish $u_{\pm}$ from $u^{S}_{\pm}$ as well.

Two basic ways to obtain 3-manifolds from a knot are doing Dehn surgeries and taking branched coverings of $S^{3}$ along the knot. Comparing with Owens' work, Theorem \ref{main} is an application of $d$-invariant in the other direction.  However, our obstruction does not distinguish $u_{\pm}$ from $u^{S}_{\pm}$ either, due to the following observation. 

\begin{obs}
If $K$ is smoothly concordant to a knot $K'$, then there is an affine isomorphism $\beta: \mathrm{Spin}^{c}(S^{3}_{p/q}(K))\rightarrow \mathrm{Spin}^{c}(S^{3}_{p/q}(K'))$ so that $d(S^{3}_{p/q}(K), s)=d(S^{3}_{p/q}(K'), \beta (s))$ for any $p/q \in \mathbb{Q}\setminus\lbrace 0\rbrace$ and $s\in \mathrm{Spin}^{c}(S^{3}_{p/q}(K))$.
\end{obs}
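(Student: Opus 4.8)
The plan is to realize the two surgered manifolds as the two ends of a single cobordism built from the concordance, and then show that this cobordism is homologically trivial enough that the $d$-invariant cannot change across it. Let $C\subset S^{3}\times[0,1]$ be a smooth concordance annulus from $K$ to $K'$, so that $\partial C=(K\times\{0\})\sqcup(K'\times\{1\})$ and $C\cong S^{1}\times[0,1]$. First I would perform $p/q$-surgery along $C$: remove a tubular neighborhood $\nu(C)\cong C\times D^{2}$ and reglue it using the framing determined by $p/q$. The key point is that the Seifert (longitudinal) framings of $K$ and $K'$ extend coherently over $C$, because the longitude of $K$ and that of $K'$ are homologous in the exterior $X=(S^{3}\times[0,1])\setminus\mathrm{int}\,\nu(C)$; this makes the $p/q$-surgery well defined along the whole annulus. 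Restricting to the two boundary copies of $S^{3}$, the result is a compact oriented four-manifold $W$ with $\partial W=\bigl(-S^{3}_{p/q}(K)\bigr)\sqcup S^{3}_{p/q}(K')$.

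The second step is to check that $W$ is a homology cobordism. For a smooth concordance the inclusions of the two knot exteriors $E(K)=S^{3}\setminus\nu(K)$ and $E(K')$ into the concordance exterior $X$ induce isomorphisms on integral homology; in particular $H_{1}(X)\cong\mathbb{Z}$ is generated by a meridian that restricts to the meridians of $K$ and of $K'$. Gluing back the reparametrized $C\times D^{2}$ and running a Mayer--Vietoris argument then shows that the inclusions $S^{3}_{p/q}(K)\hookrightarrow W$ and $S^{3}_{p/q}(K')\hookrightarrow W$ are isomorphisms on integral homology, so in particular $b_{1}(W)=b_{2}(W)=0$ and $W$ is a rational homology $S^{3}\times[0,1]$. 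Because the restriction maps $\mathrm{Spin}^{c}(W)\to\mathrm{Spin}^{c}\bigl(S^{3}_{p/q}(K)\bigr)$ and $\mathrm{Spin}^{c}(W)\to\mathrm{Spin}^{c}\bigl(S^{3}_{p/q}(K')\bigr)$ are then bijections, I can define $\beta$ to be the composite that sends $\mathfrak{s}|_{S^{3}_{p/q}(K)}$ to $\mathfrak{s}|_{S^{3}_{p/q}(K')}$ for the common extension $\mathfrak{s}\in\mathrm{Spin}^{c}(W)$.

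Finally I would feed $W$ into Ozsv\'ath and Szab\'o's $d$-invariant inequality for cobordisms. Viewing $W$ as a cobordism from $S^{3}_{p/q}(K)$ to $S^{3}_{p/q}(K')$, we have $b_{2}^{+}(W)=0$ since $b_{2}(W)=0$, and since $c_{1}(\mathfrak{s})$ is torsion and $\chi(W)=\sigma(W)=0$, the inequality collapses to $d\bigl(S^{3}_{p/q}(K'),\mathfrak{s}|_{S^{3}_{p/q}(K')}\bigr)\geq d\bigl(S^{3}_{p/q}(K),\mathfrak{s}|_{S^{3}_{p/q}(K)}\bigr)$. Applying the same inequality to the orientation-reversed cobordism $-W$, which also has vanishing $b_{2}^{+}$, reverses the direction and yields the opposite inequality, so equality holds for every $\mathfrak{s}$; this gives $d(S^{3}_{p/q}(K),s)=d(S^{3}_{p/q}(K'),\beta(s))$ for all $s$ and all $p/q\in\mathbb{Q}\setminus\{0\}$. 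The main obstacle I anticipate is the homological bookkeeping of the second step: checking that the surgery framing extends coherently over $C$ and that the Mayer--Vietoris computation produces isomorphisms on the nose, so that $\beta$ is genuinely a bijection rather than merely an affine map defined up to a finite ambiguity. Once $W$ is known to be a homology cobordism, the two-sided $d$-invariant argument of the last step is routine.
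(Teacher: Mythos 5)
Your proposal is correct and takes essentially the approach the paper intends: the paper states this as an Observation without a written proof, but its stated property (\ref{4}) --- that $d$-invariants agree across a rational homology cobordism --- is exactly the engine, and your surgered concordance exterior $W$ is precisely such a cobordism between $S^{3}_{p/q}(K)$ and $S^{3}_{p/q}(K')$, with $\beta$ given by restricting spin$^{c}$-structures of $W$ to the two ends. The only cosmetic difference is that you re-derive (\ref{4}) by applying the negative semi-definite cobordism inequality to both $W$ and $-W$, rather than quoting the equality directly.
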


Section~2 is devoted to the proof of Theorem~\ref{main} and applications of the theorem.

\section{Proof of Theorem~\ref{main}}
The definition and some properties of $d$-invariant were introduced by Ozsv{\'a}th and Szab{\'o} in \cite{MR1957829}. Here we recall the definition first and then review those properties to be used in Section~2.2, where the proof is given. 

\subsection{The definition and some properties of $d$-invariant.}

Let $Y$ be an oriented rational homology 3-sphere and $s\in\mathrm{Spin}^{c}(Y)$ be a spin$^{c}$-structure over $Y$. Ozsv{\'a}th and Szab{\'o} \cite{MR2113019} defined the Heegaard Floer homology associated with the pair $(Y, s)$. One version of this homology is $HF^{+}(Y, s)$, a graded torsion ${\mathbb F}[U]$-module with ${\mathbb F}:={\mathbb Z}/2{\mathbb Z}$ and $U$ a formal polynomial variable. For more information about Heegaard Floer homology, please see the appendix.

The definition of $d$-invariant was first given in \cite{MR1957829}. In this paper, we refer to the following definition.

\begin{defn} 
Let $Y$ be an oriented rational homology 3-sphere and $s\in\mathrm{Spin}^{c}(Y)$ be a spin$^{c}$-structure over $Y$. Define
$$d(Y,s)=\min_{\xi \neq 0 \in HF^{+}(Y,s)}\lbrace \operatorname{gr}(\xi)\vert \xi\in \mathrm{Im}(U^{k}), \text{ for all } k\geq 0\rbrace,$$ where $\operatorname{gr}(\xi)$ denotes the grading of $\xi$ and takes value in ${\mathbb Q}$.
\end{defn}

The $d$-invariants for $Y$ and $-Y$, where $``-"$ means the reverse of the orientation, are related by the formula 
\begin{equation}
\label{5}
d(-Y,s)=-d(Y,s)
\end{equation}
under the natural identification ${\rm Spin}^{c}(Y)\cong {\rm Spin}^{c}(-Y)$.

It is known that $d$-invariant is additive under the connected sum of 3-manifolds. Given two rational homology 3-spheres $Y_{1}$ and $Y_{2}$, and spin$^{c}$-structures $s_{i}\in\mathrm{Spin}^{c}(Y_{i})$ for $i=1,2$, we have 
\begin{equation}
\label{sum}
d(Y_{1}\sharp Y_{2}, s_{1}\sharp s_{2})=d(Y_{1}, s_{1})+d(Y_{2}, s_{2}),
\end{equation}
where $s_{1}\sharp s_{2}$ means the sum of spin$^{c}$-structures.

Suppose that $Y$ is an oriented rational homology 3-sphere, that $X$ is an oriented negative-definite simply connected smooth 4-manifold with $\partial X=Y$ and that $t\in {\rm Spin}^{c}(X)$. Then it was shown in \cite{MR1957829} that 
\begin{eqnarray}
\label{1}
d(Y, t\bigm|_Y) &\geq & \frac{c_{1}^{2}(t)+b_{2}(X)}{4},
\end{eqnarray}
where $c_{1}(t)\in H^{2}(X; \mathbb{Z})$ is the first Chern class of $t$ and $c_{1}^{2}(t)\in \mathbb{Q}$ is the image of $(c_{1}(t), c_{1}(t))$ under the cup product, $b_{2}(X)$ is the second Betti number of $X$, and $t\bigm|_Y$ is the restriction of $t$ onto $Y$. Suppose $X$ is a rational smooth 4-cobordism from the rational homology 3-sphere $Y$ to the rational homology 3-sphere $Y'$. Then 
\begin{equation}
\label{4}
d(Y, t\bigm|_Y)=d(Y', t\bigm|_{Y'})
\end{equation}
for any $t\in {\rm Spin}^{c}(X)$. The above relations (\ref{5}), (\ref{sum}), (\ref{1}) and (\ref{4}) all come from Ozsv{\'a}th and Szab{\'o}'s original paper
\cite{MR1957829}.

It is in general not easy to calculate the $d$-invariant. But for some plumbed 3-manifolds, a formula for $d$-invariant was established in \cite{MR1988284}, which we recall here. Let $G$ be a tree equipped with an integer-valued function $m$ on its vertices. Such a tree $G$ gives rise to a 4-manifold $X(G)$ with boundary $Y(G)$.  The 4-manifold $X(G)$ is obtained by plumbing a collection of disk bundles over the 2-sphere indexed by the vertices of $G$, so that the Euler number of the disk bundle corresponding to a vertex $v$ is the value $m(v)$. Two disk bundles are plumbed if their corresponding vertices of $G$ are connected by an edge in $G$. The second integral homology group of $X=X(G)$ is freely generated by the vertices of $G$. Namely $H_{2}(X; \mathbb{Z})\cong \bigoplus_{v\in V(G)}\mathbb{Z}<e_v>$ where $V(G)$ is the set of vertices of $G$ and $e_{v}$ is the generator of $H_{2}(X; \mathbb{Z})$ corresponding to $v$. Then the intersection form on $H_{2}(X; \mathbb{Z})$ is given by $G$ as follows. 
\begin{equation*}
e_{v}\cdot e_{w}=
\begin{cases}
m(v) & \text{ if $v=w$}\\
1 & \text{if $i\neq j$ and $v$ and $w$ are connected by an edge}\\
0 & \text{otherwise}.
\end{cases}
\end{equation*}

A tree $G$ described above is called \textit{negative-definite} if the intersection form associated with $G$ is negative definite. 
A vertex $v\in V(G)$ is said to be a \textit{bad vertex} if $\vert m(v)\vert$ is less than the number of its adjoining edges.
Here is the formula we will use.
\begin{theo}[Ozsv{\'a}th-Szab{\'o} \cite{MR1988284} ]
\label{os}
Let $G$ be a negative-definite tree with at most two bad vertices, and fix a spin$^{c}$-structure $s\in  {\rm Spin}^{c}(Y(G))$. Then
\begin{equation}
d(Y(G), s)=\max \left\{\dfrac{c_{1}(t)^{2}+\vert V(G)\vert}{4} \left| t\in {\rm Spin}^{c}(X), t\bigm|_{Y(G)}=s \right. \right\},
\end{equation}
where $\vert V(G) \vert$ is the number of vertices of $G$ and is also the second Betti number of $X(G)$.
\end{theo}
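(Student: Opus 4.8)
The plan is to prove the two inequalities separately. The inequality $d(Y(G),s)\geq\max\{(c_1(t)^2+|V(G)|)/4\}$ is the easy half and follows at once from the general bound (\ref{1}). Since $G$ is a negative-definite tree, the plumbed $4$-manifold $X=X(G)$ is negative-definite; and because $G$ is a tree and each plumbing piece (a disk bundle over $S^{2}$) is simply connected, van Kampen's theorem shows $X$ is simply connected. Thus (\ref{1}) applies to every $t\in\mathrm{Spin}^{c}(X)$ with $t|_{Y(G)}=s$, giving $d(Y(G),s)\geq(c_1(t)^2+|V(G)|)/4$; taking the maximum over all such $t$ yields the lower bound. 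Note that this direction uses neither the bad vertices nor any Floer-theoretic computation.

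The substance of the theorem is the reverse inequality, which requires actually computing $HF^{+}(Y(G),s)$ and showing the bound (\ref{1}) is sharp for at least one $t$ in the class $s$. First I would set up the combinatorial model. Write $\mathrm{Char}(G)$ for the set of characteristic covectors $K\in H^{2}(X;\mathbb{Z})\cong\mathrm{Hom}(H_2(X),\mathbb{Z})$, i.e.\ those satisfying $\langle K,e_v\rangle\equiv e_v\cdot e_v\pmod 2$ for every vertex $v$; these are exactly the first Chern classes $c_1(t)$. Two such covectors restrict to the same $s\in\mathrm{Spin}^{c}(Y(G))$ precisely when they differ by an element of the subgroup generated by the $2\,\mathrm{PD}[e_v]$, so the relevant $\mathrm{Spin}^{c}$ structures correspond to these cosets. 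I would then define a graded $\mathbb{F}[U]$-module $\mathbb{H}^{+}(G)$ built from functions $\phi:\mathrm{Char}(G)\to\mathcal{T}^{+}_{0}$ subject to the vertex relations: setting $2n_v=\langle K,e_v\rangle+e_v\cdot e_v$ (an even integer exactly because $K$ is characteristic), one imposes $U^{n_v}\phi(K+2\,\mathrm{PD}[e_v])=\phi(K)$ when $n_v\geq 0$ and $\phi(K+2\,\mathrm{PD}[e_v])=U^{-n_v}\phi(K)$ when $n_v\leq 0$, with the value of $\phi$ at $K$ placed in grading $(K^2+|V(G)|)/4$.

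The heart of the proof is the identification $HF^{+}(Y(G),s)\cong\mathbb{H}^{+}(G,s)$ of graded $\mathbb{F}[U]$-modules, restricted to the coset of $s$. I would build this isomorphism inductively, attaching the framed $2$-handles indexed by the vertices of $G$ one at a time and running the associated surgery long exact sequences, with the induced cobordism maps realized as the multiplications by powers of $U$ dictated by the relative-grading (adjunction) formula. The hypothesis that $G$ has at most two bad vertices is precisely what is needed to control these exact sequences: away from the bad vertices the relevant maps are forced to be surjective and the triangles split, so that the resulting direct limit carries no unexpected differentials and reproduces $\mathbb{H}^{+}(G)$ on the nose. This combinatorial computation of $HF^{+}$, together with the verification that two bad vertices still permit the induction to close up, is where I expect the main difficulty to lie.

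Finally I would read off the $d$-invariant from the model. Since $Y(G)$ is a rational homology sphere, $HF^{+}(Y(G),s)=\mathcal{T}^{+}\oplus(\text{reduced part})$, and $d(Y(G),s)$ is the grading of the bottom of the tower $\mathcal{T}^{+}$. Under the isomorphism this bottom corresponds to the generator of maximal grading $(K^2+|V(G)|)/4$ among characteristic covectors $K$ representing $s$, because $U$ lowers grading by $2$ and the tower is generated from above by the largest-square representatives of the coset. Hence $d(Y(G),s)=\max\{(K^2+|V(G)|)/4\mid K\in\mathrm{Char}(G),\ [K]=s\}=\max\{(c_1(t)^2+|V(G)|)/4\mid t|_{Y(G)}=s\}$, which together with the lower bound completes the proof. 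As a consistency check, the single vertex with weight $-1$ gives $Y(G)=S^{3}$ and maximum $(-1+1)/4=0=d(S^{3})$.
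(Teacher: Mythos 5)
The paper does not prove this statement at all: it is imported verbatim from Ozsv{\'a}th--Szab{\'o} \cite{MR1988284} (it is the $d$-invariant corollary of their computation of $HF^{+}$ for negative-definite plumbing trees), so there is no internal proof to compare against, and the right benchmark is the original argument --- which your outline essentially reconstructs. Your easy half is complete and correct: $X(G)$ is simply connected and negative-definite, so (\ref{1}) gives the lower bound $d(Y(G),s)\geq (c_{1}^{2}(t)+\vert V(G)\vert)/4$ for every $t$ restricting to $s$, with no use of the bad-vertex hypothesis. For the hard half you have correctly identified the Ozsv{\'a}th--Szab{\'o} model: your $\mathbb{H}^{+}(G)$, with the relations $U^{n_v}\phi(K+2\,\mathrm{PD}[e_v])=\phi(K)$ for $n_v\geq 0$ and $\phi(K+2\,\mathrm{PD}[e_v])=U^{-n_v}\phi(K)$ for $n_v\leq 0$, is exactly their $\mathbb{H}^{+}(G)$, and the identification with Floer homology is indeed proved by induction over surgery exact triangles. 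But at this point your write-up is a roadmap rather than a proof: the inductive step (the two graph moves --- raising/lowering a weight and blowing down --- the verification that $\mathbb{H}^{+}$ fits into matching short exact sequences, and the check that the induction closes within the class of graphs allowed by the bad-vertex count) is the entire content of \cite{MR1988284}, and you explicitly defer it.

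Two precision points should you carry the plan out. First, with two bad vertices the identification is \emph{not} ``on the nose'': Ozsv{\'a}th--Szab{\'o} prove $HF^{+}_{\mathrm{odd}}(-Y(G))=0$ and the full isomorphism only for at most one bad vertex; with two bad vertices their argument identifies only the even-graded part, which, however, contains the image of $HF^{\infty}$ (the tower) and therefore still determines the $d$-invariant --- this weaker statement is what the theorem as quoted rests on. Second, there is an orientation/grading convention to straighten: their isomorphism naturally computes $HF^{+}(-Y(G),s)$, with the class attached to a characteristic covector $K$ sitting in degree $-(K^{2}+\vert V(G)\vert)/4$, and one concludes $d(Y(G),s)=\max_{K}(K^{2}+\vert V(G)\vert)/4$ via $d(-Y,s)=-d(Y,s)$, i.e.\ equation (\ref{5}). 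As literally written --- grading $+(K^{2}+\vert V(G)\vert)/4$ in a model for $Y(G)$ itself --- the bottom of the tower would be located at the \emph{minimum} of $(K^{2}+\vert V(G)\vert)/4$ over the coset, which is the wrong sign; your final formula is correct, but the intermediate bookkeeping is not consistent and needs the orientation reversal made explicit.
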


Let $K_{+}$ and $K_{-}$ be two knots which are identical except in a neighbourhood of a crossing, as shown in Figure~\ref{fig:f1}. In Section~4 of \cite{MR2902750}, the author showed an inequality between the $d$-invariants of $S_{p/q}^{3}(K_{+})$ and those of $S_{p/q}^{3}(K_{+})$ for any $p/q \in \mathbb{Q}\setminus\lbrace 0\rbrace$. There is a cobordism $W$ from $S_{p/q}^{3}(K_{+})$ to $S_{p/q}^{3}(K_{-})$ given by attaching a 2-handle to $S_{p/q}^{3}(K_{+})\times [0,1]$ along a circle with framing -1. The cobordism $W$ provides a natural affine isomorphism $\alpha : \mathrm{Spin}^{c}(S^{3}_{p/q}(K_{-}))\rightarrow \mathrm{Spin}^{c}(S^{3}_{p/q}(K_{+}))$, and it was proved in \cite[Eq.~(4)]{MR2902750} that 
\begin{equation}
\label{3}
\begin{array}{l}
d(S_{p/q}^{3}(K_{-}), s)-d(S_{p/q}^{3}(K_{+}), \alpha (s))\geq 0\\
\end{array}
\end{equation}
for any $s\in \mathrm{Spin}^{c}(S^{3}_{p/q}(K_{-}))$. The map $\alpha$ is natural in the sense that $\alpha(m_{-}\cdot s)=m_{+}\cdot\alpha(s)$ for any $s\in \mathrm{Spin}^{c}(S^{3}_{p/q}(K_{-}))$, where $``\cdot "$ means the action of the second cohomology group on the set of spin$^{c}$-structures. Here $m_{-}$ and $m_{+}$ are the Poincar{\'e} duals of the homology classes of the coherently oriented meridians of $K_{-}$ and $K_{+}$, and thus they generate $H^{2}(S^{3}_{p/q}(K_{-}); \mathbb{Z})$ and $H^{2}(S^{3}_{p/q}(K_{+}); \mathbb{Z})$.

\begin{figure}
	\centering
		\includegraphics[width=0.8\textwidth]{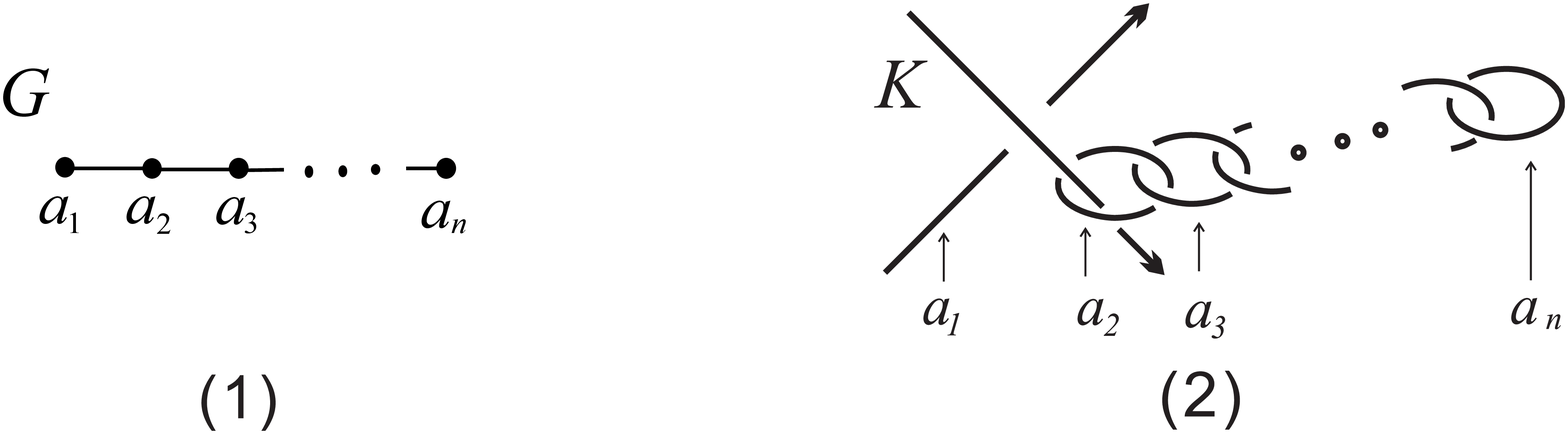}
	\caption{}
	\label{fig:f2}
\end{figure}

\subsection{Proof of Theorem~\ref{main}}

\begin{proof}[Proof of Theorem~\ref{main}]
Suppose $u_{-}(K)=0$, which means $K$ can be transformed into the unknot by performing only positive crossing changes. By (\ref{3}), we have an affine isomorphism $\alpha : \mathrm{Spin}^{c}(L(p,q))\rightarrow \mathrm{Spin}^{c}(S^{3}_{p/q}(K))$ (here we abuse the notation $\alpha$) such that  
\begin{equation*}
\label{*}
d(S_{p/q}^{3}(K), \alpha (s))\leq d(S_{p/q}^{3}(O), s)=d(L(p,q), s), \tag{$\ast$}
\end{equation*}
for any $p/q\in \Omega$ and any $s\in \mathrm{Spin}^{c}(L(p,q))$.

Let $[a_{1}, a_{2}, \cdots, a_{n}]$ be the continued fraction of $p/q$ which makes $p/q$  belong to $\Omega$. Let $G$ be the tree given by the above continued fraction, as shown in Figure~\ref{fig:f2} (1). From the definition of $\Omega$, it is easy to check that $G$ is a negative-definite tree with at most two bad vertices. The manifold $Y(G)$ in this case is the lens space $L(p,q)$. By Theorem~\ref{os}, we have
\begin{equation*}
\label{*'}
d(L(p,q), s)=\max \left\{\dfrac{c_{1}(t)^{2}+n}{4} \left| t\in {\rm Spin}^{c}(X(G)), t\bigm|_{L(p,q)}=s \right. \right\}, \tag{$\ast'$}
\end{equation*}
for any $s\in \mathrm{Spin}^{c}(L(p,q))$.

On the other hand, the 3-manifold $S_{p/q}^{3}(K)$ bounds a smooth 4-manifold $X'$ which is constructed by the Kirby diagram in Figure~\ref{fig:f2} (2). The intersection form on $X'$ is the same as that on $X(G)$, and therefore is negative definite. By (\ref{1}), we have
\begin{equation*}
\label{*''}
d(S_{p/q}^{3}(K), s)\geq\max \left\{\dfrac{c_{1}(t)^{2}+n}{4} \left| t\in {\rm Spin}^{c}(X'), t\bigm|_{S_{p/q}^{3}(K)}=s \right. \right\},\tag{$\ast''$}
\end{equation*}
for any $s\in \mathrm{Spin}^{c}(S_{p/q}^{3}(K))$.

It is easy to see that there is an affine isomorphism $\varphi :\mathrm{Spin}^{c}(X(G))\rightarrow \mathrm{Spin}^{c}(X')$ such that $c_{1}^{2}(t)=c_{1}^{2}(\varphi(t))$ and $\alpha (t\bigm|_{L(p,q)})=\varphi (t)\bigm|_{S_{p/q}^{3}(K)}$ for any $t\in \mathrm{Spin}^{c}(X(G))$. 
Then By (\ref{*'}) and (\ref{*''}), we have $d(S_{p/q}^{3}(K), \alpha (i))\geq d(L(p,q),s)$ for any $s\in \mathrm{Spin}^{c}(L(p,q))$. This together with (\ref{*}) implies that there is an affine isomorphism $\alpha : \mathrm{Spin}^{c}(L(p,q))\rightarrow \mathrm{Spin}^{c}(S^{3}_{p/q}(K))$ such that $d(S_{p/q}^{3}(K), \alpha (s))=d(L(p,q), s)$.

\end{proof}

\subsection{Application}
\subsubsection{Knots in $S^{3}$ which admit $L$-space surgeries}
In this part, we discuss an application of Theorem~\ref{main} to knots in $S^{3}$ which admit $L$-space surgeries. In particular we prove Proposition~\ref{prop}. First, we recall a theorem in \cite{MR2764036}.

Recall that an $L$-space is a rational homology 3-sphere $Y$ whose Floer homology $HF^{+}(Y,s)$ in each spin$^{c}$-structure $s\in \mathrm{Spin}^{c}(Y)$, as a relatively graded $\mathbb{F}[U]$-module, is isomorphic to $HF^{+}(S^{3})$. Let $K\subset S^{3}$ be a knot in the 3-sphere. Write its normalized Alexander polynomial as $$\Delta_{K}(T)=a_{0}+\sum_{i>0}a_{i}(T^{i}+T^{-i}),$$ and let $t_{i}(K)=\sum_{j=0}^{\infty}ja_{|i|+j}$ for $i\in \mathbb{Z}$. We have the following theorem.


\begin{theo}[Ozsv{\'a}th-Szab{\'o} \cite{MR1957829}]
\label{lemma}
Let $K\subset S^{3}$ be a knot which admits an $L$-space surgery, for some integer $p\geq 1$. Then there is a bijection $f: H_{1}(S_{p}^{3}(K); \mathbb{Z})=\mathbb{Z}/p\mathbb{Z}\cong \mathrm{Spin}^{c}(S^{3}_{p}(K))$ and an affine isomorphism $g: \mathrm{Spin}^{c}(S^{3}_{p}(K))\rightarrow \mathrm{Spin}^{c}(L(p, 1))$ satisfying the following property. For all integers $i$ with $|i|\leq p/2$ we have
$$d(S^{3}_{p}(K), f([i]))-d(L(p, 1), g\circ f([i]))=-2t_{i}(K)\leq 0,$$ while for all $|i|>p/2$, we have $t_{i}(K)=0$. 
\end{theo}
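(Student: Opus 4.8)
The plan is to obtain the formula from Ozsv\'ath and Szab\'o's knot Floer homology together with the large-surgery formula for $HF^{+}$. First I would record that a knot admitting a positive $L$-space surgery is an $L$-space knot: if $S^{3}_{p}(K)$ is an $L$-space for a single integer $p\geq 1$, then $S^{3}_{n}(K)$ is an $L$-space for every $n\geq 2g(K)-1$, and in particular $p\geq 2g(K)-1$. This genus bound is exactly what will kill the terms with $|i|>p/2$ at the end.

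Next I would fix the two parametrizations. Since $H_{1}(S^{3}_{p}(K);\mathbb{Z})\cong\mathbb{Z}/p\mathbb{Z}$ acts freely and transitively on $\mathrm{Spin}^{c}(S^{3}_{p}(K))$, the bijection $f$ is determined up to an overall shift, and the affine isomorphism $g$ is the unique map intertwining the two $\mathbb{Z}/p\mathbb{Z}$-torsor structures induced by the surgery cobordisms. With this bookkeeping fixed, the content of the theorem is a pointwise comparison of correction terms.

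The core step is the large-surgery identification. For $|i|\leq p/2$ and $p\geq 2g(K)-1$, the integer surgery formula \cite{MR1957829} provides a grading-preserving (up to the standard overall shift) isomorphism $HF^{+}(S^{3}_{p}(K),f([i]))\cong H_{*}(A^{+}_{i})$, where $A^{+}_{i}$ is the subquotient of $CFK^{\infty}(K)$ that computes large surgeries in the spin$^{c}$-structure labelled by $i$. Because $S^{3}_{p}(K)$ is an $L$-space, each $H_{*}(A^{+}_{i})$ is a single tower $\mathcal{T}^{+}$, and the correction term is the grading of its bottom generator. The vertical projection $v^{+}_{i}$ from $A^{+}_{i}$ to $CF^{+}(S^{3})$ acts as multiplication by $U^{V_{i}}$ on towers for some nonnegative integer $V_{i}$; comparing with the unknot, whose projection is an isomorphism (so $V_{i}=0$ and the surgery is $L(p,1)$), yields
\begin{equation*}
d(S^{3}_{p}(K),f([i]))-d(L(p,1),g\circ f([i]))=-2V_{i},
\end{equation*}
with $V_{i}\geq 0$, which already gives the asserted inequality. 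Here $p\geq 2g-1$ guarantees that the companion invariant $V_{p-i}$ vanishes, so no wrap-around correction term intervenes.

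It remains to identify $V_{i}=t_{i}(K)$ and to check the vanishing for $|i|>p/2$. For an $L$-space knot the complex $CFK^{\infty}(K)$ is a staircase whose step lengths are governed by the exponent gaps of $\Delta_{K}(T)$, so $V_{i}$ can be read off directly from the Alexander polynomial; performing this reading, or equivalently matching the two generating functions, produces $V_{i}=\sum_{j>0}j\,a_{|i|+j}=t_{i}(K)$. Finally, $\Delta_{K}$ has degree at most $g(K)$, so $a_{k}=0$ for $k>g(K)$ and hence $t_{i}=0$ whenever $|i|\geq g(K)$; since $p\geq 2g(K)-1$ forces $|i|>p/2$ to imply $|i|\geq g(K)$, we obtain $t_{i}(K)=0$ for $|i|>p/2$. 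I expect the identification $V_{i}=t_{i}(K)$ to be the main obstacle, as it requires the precise staircase model of $L$-space knots and a careful grading computation relating the Floer-theoretic local invariant to the purely combinatorial torsion coefficient, whereas the surgery formula and the genus estimate are comparatively formal.
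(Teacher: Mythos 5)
You should know at the outset that the paper contains no proof of this statement: Theorem~\ref{lemma} is quoted as a known theorem of Ozsv{\'a}th and Szab{\'o} \cite{MR1957829} and is then used as a black box in the proof of Proposition~\ref{prop}. So the only meaningful comparison is with the original argument in the literature, not with anything in this paper.

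Judged on its own terms, your outline is correct and is the standard modern derivation. The chain you describe does fit together: a positive integer $L$-space surgery forces $p\geq 2g(K)-1$; in that range the large surgery theorem (in this paper that is Theorem~\ref{cal}, quoted from \cite{MR2065507} --- not a result of \cite{MR1957829}, a minor citation slip on your part) identifies $HF^{+}(S^{3}_{p}(K),f([i]))$ with $H_{*}(A^{+}_{i})$ up to a grading shift independent of the knot; for an $L$-space knot $H_{*}(A^{+}_{i})$ is a single tower whose bottom sits in grading $-2V_{i}$ relative to the unknot's, because $v^{+}_{i}$ is grading-preserving, surjective on homology, and acts as $U^{V_{i}}$ on towers; and $p\geq 2g(K)-1$ together with $\deg \Delta_{K}\leq g(K)$ kills $t_{i}$ for $|i|>p/2$. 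Where your route genuinely differs from Ozsv{\'a}th--Szab{\'o}'s is at the step you yourself flag as the main obstacle, the identification $V_{i}=t_{i}(K)$. Their proof predates the $CFK^{\infty}$ and $V_{i}$ formalism entirely; it runs through the surgery exact sequence and the Euler-characteristic identity $\chi(HF^{+}(S^{3}_{0}(K),s))=-t_{s}(K)$ of \cite{MR2113020}, so the torsion coefficients enter as Euler characteristics, not through a structure theorem for the knot complex. The same idea can be run inside your framework and makes the hard step nearly formal: the kernel of $v^{+}_{i}$ is the subcomplex $C\{i'<0,\ j'\geq i\}$, whose Euler characteristic equals $\sum_{j>0}ja_{i+j}=t_{i}(K)$ for \emph{every} knot, since $\chi(\widehat{HFK}(S^{3},K,j))=a_{j}$ and each lattice point $(i',j')$ with $i'<0$, $j'\geq i$ contributes $a_{j'-i'}$ with an even grading shift; when the surgery is an $L$-space, surjectivity of $v^{+}_{i}$ on homology splits the long exact sequence, so this kernel has homology $\mathbb{F}^{V_{i}}$ concentrated in even Maslov gradings, whence $V_{i}=t_{i}(K)\geq 0$. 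By contrast, your staircase argument needs the classification of $CFK^{\infty}$ of $L$-space knots, which is the later theorem \cite{MR2168576} (also quoted in this paper's appendix) and is strictly heavier input; it does work, but it trades a two-line Euler-characteristic count for a careful combinatorial grading computation.
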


We prove the following property for knots which admit $L$-space surgeries.

\begin{prop}
\label{prop}
Suppose the knot $K\subset S^{3}$ admits an $L$-space surgery for the rational number ${p}/{q}>0$. If $u_{+}(K)=0$, then $K$ is the unknot.
\end{prop}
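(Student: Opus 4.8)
The plan is to combine Theorem~\ref{main} with the characterization of $d$-invariants for $L$-space surgeries in Theorem~\ref{lemma}, and to extract the conclusion that the Alexander polynomial of $K$ is trivial, which for a knot admitting an $L$-space surgery forces $K$ to be the unknot. The first observation is a symmetry step: the hypothesis $u_{+}(K)=0$ should be converted into a statement about $u_{-}$ of the mirror image $\overline{K}$, since $u_{+}(K)=u_{-}(\overline{K})$. Mirroring reverses the slope and reverses the orientation of the surgered manifold, so $S^{3}_{p/q}(\overline{K})\cong -S^{3}_{-p/q}(K)$; together with the fact that $K$ admits an $L$-space surgery for some $p/q>0$ (and hence, by the classification of which knots have $L$-space surgeries, for all sufficiently large positive integers $p$), I can arrange an integral slope $p$ large enough that both the $L$-space hypothesis and the membership $-p/q\in\Omega$ needed for Theorem~\ref{main} are available.

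Next I would apply Theorem~\ref{main} to obtain, from $u_{-}(\overline{K})=0$, an affine isomorphism identifying the $d$-invariants of the surgered manifold with those of the corresponding lens space, so that $d\big(S^{3}_{p}(\overline{K}),\alpha(s)\big)=d\big(L(p,q),s\big)$ for the relevant slope. Using the orientation-reversal formula \eqref{5} and the connected-sum/relabeling bookkeeping, I translate this back into the statement that $d\big(S^{3}_{p}(K),\cdot\big)$ agrees with $d\big(L(p,1),\cdot\big)$ under a suitable affine isomorphism, i.e.\ the correction-term differences all vanish. Comparing this with Theorem~\ref{lemma}, which computes exactly these differences as $-2t_{i}(K)$, I conclude that $t_{i}(K)=0$ for every $i$.

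From $t_{i}(K)=0$ for all $i$ I recover the Alexander polynomial. Since $t_{i}(K)=\sum_{j=0}^{\infty} j\,a_{|i|+j}$, the vanishing of all $t_{i}$ (read off from the largest index downward) forces every coefficient $a_{i}$ with $i>0$ to vanish, so $\Delta_{K}(T)=a_{0}=1$ by normalization; the knot has trivial Alexander polynomial. Finally, for a knot in $S^{3}$ that admits an $L$-space surgery, having trivial Alexander polynomial forces it to be the unknot: the Alexander polynomial of such a knot is monic of degree equal to twice its Seifert genus, so trivial Alexander polynomial gives genus zero, and a genus-zero knot is the unknot.

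The main obstacle I anticipate is the careful orientation and slope bookkeeping in the mirroring step: making sure that the affine isomorphisms from \eqref{5}, from Theorem~\ref{main}, and from Theorem~\ref{lemma} are composed consistently so that the two families of $d$-invariant differences are genuinely being compared on the same spin$^{c}$-structures. The inequalities point in opposite directions---Theorem~\ref{main} (via \eqref{3}) gives one inequality after mirroring, while Theorem~\ref{lemma} gives $-2t_{i}(K)\le 0$---and the crux is verifying that these squeeze together to force equality, hence $t_{i}(K)=0$, rather than merely being mutually consistent. Once the identification of spin$^{c}$-structures is pinned down, the remaining algebra is routine.
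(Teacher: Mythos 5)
Your proposal follows the paper's route step for step: mirror so that $u_{+}(K)=0$ becomes $u_{-}(\overline{K})=0$, use the fact that one positive $L$-space slope gives all sufficiently large integral ones to pick an integer $h>0$ with $S^{3}_{h}(K)$ an $L$-space and $-h\in\Omega$, apply Theorem~\ref{main} to $\overline{K}$, convert back via the orientation-reversal formula (\ref{5}), compare with Theorem~\ref{lemma} to force $t_{i}(K)=0$, recover triviality of the Alexander polynomial, and conclude via the genus--degree equality for knots with positive integral $L$-space surgeries. However, there is a genuine gap at precisely the point you flag as ``the main obstacle'' and then defer: the affine isomorphism $\alpha$ produced by Theorem~\ref{main} arises from the crossing-change cobordisms of (\ref{3}), while the identifications $f$ and $g$ in Theorem~\ref{lemma} come from Ozsv{\'a}th--Szab{\'o}'s surgery formula, and there is no reason for these two parametrizations of $\mathrm{Spin}^{c}(S^{3}_{h}(K))$ to agree. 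Your plan requires comparing the two families of $d$-invariants ``on the same spin$^{c}$-structures,'' and ``pinning down the identification'' is not routine bookkeeping --- it is the entire difficulty, and your proposal contains no argument for it.

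The paper sidesteps this with a short but essential trick that is absent from your proposal: sum over all spin$^{c}$-structures. Since $\alpha$ is a bijection, the equality of Theorem~\ref{main} (after mirroring) yields $\sum_{s} d(S^{3}_{h}(K),s)=\sum_{s} d(L(h,1),s)$, an identity independent of any choice of identification. On the other hand, summing the conclusion of Theorem~\ref{lemma} over $[i]\in\mathbb{Z}/h\mathbb{Z}$ and using that $f$ and $g\circ f$ are bijections gives $\sum_{s} d(S^{3}_{h}(K),s)-\sum_{s} d(L(h,1),s)=-2\sum_{i} t_{i}(K)$, where each $t_{i}(K)\geq 0$. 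Hence $\sum_{i}t_{i}(K)=0$ forces $t_{i}(K)=0$ for all $i$, with no need to match the two parametrizations at all. With this summation step inserted, the remainder of your argument ($a_{i}=t_{i-1}+t_{i+1}-2t_{i}=0$ for $i\geq 1$, hence genus zero, hence unknot) goes through exactly as in the paper.
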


\begin{proof}
It is known that if $S_{r_{0}}^{3}(J)$ is an $L$-space for the rational number $r_{0}>0$ and the knot $J$, then $S_{r}^{3}(J)$ is an $L$-space as well for any rational number $r\geq r_{0}$ (see \cite[Section~1.3]{MR2249248} for the discussion). Therefore for the knot $K\subset S^{3}$, based on our assumption, we can choose an integer $h>0$ such that $S_{h}^{3}(K)$ is an $L$-space and $-h\in \Omega$.

If $u_{+}(K)=0$, then $u_{-}(\overline{K})=0$ where $\overline{K}$ denotes the mirror image of $K$. Since $-h\in \Omega$, by Theorem~\ref{main} we have $$d(S_{-h}^{3}(\overline{K}), \alpha (s))=d(S_{-h}^{3}(O), s)$$ for some affine isomorphism $\alpha : \mathrm{Spin}^{c}(L(-h, 1))\rightarrow \mathrm{Spin}^{c}(S^{3}_{-h}(\overline{K}))$. This implies that
$$d(S_{h}^{3}({K}), \alpha (s))=d(S_{h}^{3}(O), s)$$ under the natural identification $\mathrm{Spin}^{c}(S^{3}_{-h}(\overline{K}))\cong \mathrm{Spin}^{c}(S^{3}_{h}(K))$. 

Summing the above equation over all $s$ and using Theorem~\ref{lemma}, we conclude that $t_{i}=0$ for all $i\in \mathbb{Z}$. Recalling the definition of $t_{i}$, we see that $a_{i}=t_{i-1}+t_{i+1}-2t_{i}=0$ for any $i\geq 1$. It was proved in \cite{MR2168576} that if a knot $J$ admits a positive integral $L$-space surgery, then the Seifert genus of $J$ coincides with the degree of the normalized Alexander polynomial of $J$. Therefore in our case, the Seifert genus of the knot $K$ is zero, which implies that $K$ is the unknot. 

\end{proof}

This proposition implies that if $K\subset S^{3}$ is a non-trivial knot which admits an $L$-space surgery for a rational number ${p}/{q}> 0$, then $K$ can never be unknotted by performing negative crossing changes.

\begin{rem}
We can modify \cite[Corollary~1.6]{MR2168576} to get the following fact. If $K\subset S^{3}$ is a non-trivial knot which admits a positive integral $L$-space surgery, then Ozsv{\'a}th and Szab{\'o}'s invariant $\tau(K)>0$. This fact also implies that $K$ can never be unknotted by performing negative crossing changes.
\end{rem}

\subsubsection{Examples from cabled knots}
Suppose that $K\subset S^{3}$ is a non-trivial knot for which there is a positive integer $r$ such that $S^{3}_{r}(K)$ is an $L$-space. Let $K_{D}:=D_{+}(K)$ be the positive-clasped untwisted Whitehead double of the knot $K$, and $C_{p,q}(K_{D})$ be the $(p,q)$-cabled knot of the knot $K_{D}$ for positive coprime integers $p$ and $q$. See Figure~\ref{fig:f3} for the illustration of the conventions we are using. In this subsection, we apply Theorem~\ref{main} to study the signed unknotting number of $C_{p,1}(K_{D})$ for any $p\geq 1$, and have the following property.

\begin{prop}
\label{prop2}
The knot $C_{p,1}(K_{D})$ has non-zero positive unknotting number.
\end{prop}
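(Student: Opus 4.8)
The plan is to show that $C_{p,1}(K_D)$ has non-zero positive unknotting number by contradiction, reducing the claim to the obstruction from Theorem~\ref{main} applied to a $d$-invariant comparison. Suppose for contradiction that $u_{+}(C_{p,1}(K_D))=0$. As in the proof of Proposition~\ref{prop}, I would pass to the mirror image: $u_{+}(C_{p,1}(K_D))=0$ is equivalent to $u_{-}(\overline{C_{p,1}(K_D)})=0$, so that Theorem~\ref{main} becomes applicable and produces an affine isomorphism identifying the $d$-invariants of $S^{3}_{p'/q'}(\overline{C_{p,1}(K_D)})$ with those of the corresponding lens space, for every $p'/q' \in \Omega$. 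The strategy is then to exhibit a single surgery slope in $\Omega$ at which this equality of $d$-invariants fails, thereby contradicting $u_{+}=0$.

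The key structural input is the surgery description of the cable. First I would use the standard fact that a positive integral surgery on the $(p,1)$-cable $C_{p,1}(J)$ of a knot $J$ is realized by a surgery on $J$ itself: specifically, large positive surgery $S^{3}_{N}(C_{p,1}(J))$ can be understood via the cabling formula, and for the $(p,1)$-cable the companion surgery slope is $N/p^{2}$. Taking $J=K_D=D_{+}(K)$, I would first establish that $K_D$, the positive-clasped untwisted Whitehead double, has a positive $L$-space surgery or at least behaves favorably under the $d$-invariant comparison. Here the hypothesis that $K$ admits a positive $L$-space surgery $S^{3}_{r}(K)$ is essential: it controls the Heegaard Floer homology of large surgeries on $K$, which in turn controls the behavior of surgeries on $K_D$ through the doubling and cabling operations.

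The mechanism I would exploit is monotonicity of $d$-invariants under the $-1$-framed cobordism from equation~(\ref{3}): each positive crossing change can only decrease the $d$-invariant (in the sense of $(\ast)$), and Theorem~\ref{main} says that when $u_{-}=0$ this inequality is forced to be an equality with the lens-space values on all of $\Omega$. So it suffices to compute, for some slope in $\Omega$, that the actual $d$-invariant of the relevant surgery on $\overline{C_{p,1}(K_D)}$ is \emph{strictly smaller} than the lens-space value. Concretely, I would compute $d(S^{3}_{h}(C_{p,1}(K_D)), s)$ for a suitable large positive integer $h$ with $-h\in\Omega$, using the cabling formula to reduce it to data of $K_D$, and then to data of $K$ via the formula for $d$-invariants of Whitehead doubles. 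The nontriviality of the clasp (the $``+"$ clasp) together with the $L$-space condition on $K$ should make one of the correction terms deviate from the corresponding lens-space correction term, producing the needed strict inequality and the contradiction.

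The main obstacle I anticipate is the explicit computation of the $d$-invariants of surgeries on the cabled double $C_{p,1}(K_D)$ in terms of $K$. The cabling formula for Heegaard Floer $d$-invariants and the behavior of the invariant $t_i$ (or equivalently the relevant local $h$-invariants) under the positive Whitehead doubling operation are delicate; in particular one must verify that the deviation survives for \emph{every} $p\geq 1$ and is not accidentally cancelled. Pinning down which spin$^{c}$-structure witnesses the strict inequality, and confirming that the chosen slope $h$ can indeed be taken with $-h\in\Omega$ (which, since $-h$ with $h\geq 1$ is of the form $-p/q$ with $p\geq q>0$, is automatic by the remark in the introduction that all such $-p/q\in\Omega$), are the technical steps that carry the weight of the argument.
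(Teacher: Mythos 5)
Your high-level framework is the same as the paper's: pass to the mirror, invoke Theorem~\ref{main}, and exhibit one slope in $\Omega$ at which the $d$-invariants of the surgered manifold deviate strictly from the lens-space values. But everything that actually carries the proof is missing, and the places where you defer to hoped-for formulas are exactly where your plan would go wrong. First, the choice of slope: you propose a ``suitable large positive integer $h$'' and a cabling formula with companion slope $N/p^{2}$. The paper instead takes the surgery coefficient to be exactly $p$, the cabling slope of $C_{p,1}$, because this is the degenerate case in Gordon's theorem where the surgery splits as a connected sum, $S^{3}_{p}(C_{p,1}(K_{D}))=L(p,1)\sharp S^{3}_{1/p}(K_{D})$. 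This single choice does two jobs at once: by additivity~(\ref{sum}) the lens space $L(p,1)$ appears as an explicit summand, so the comparison with lens-space $d$-invariants is uniform over all $p$ spin$^{c}$-structures, and the whole problem collapses to estimating the one correction term $d(S^{3}_{1/p}(K_{D}),s_{0})$. For a generic large slope $N$ there is no such splitting, and you would be forced to control $N$ correction terms of a surgery on $K_{D}$ with no clean mechanism available.

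Second, there is no off-the-shelf ``formula for $d$-invariants of Whitehead doubles'' to cite, and your hope that the clasp ``should make one of the correction terms deviate'' is precisely the statement that has to be proved. The paper handles this in two steps that your proposal does not contain: (a) it identifies $S^{3}_{1/p}(K_{D})\cong S^{3}_{1}(D_{p}(K))$, where $D_{p}(K)$ is a $p$-twisted double, notes that $D_{p}(K)$ is turned into $K_{D}$ by positive crossing changes alone, and applies the monotonicity~(\ref{3}) to get $d(S^{3}_{1/p}(K_{D}),s_{0})\leq d(S^{3}_{1}(K_{D}),s_{0})$ --- this substitutes for the nonexistent $1/p$-surgery formula; and (b) it computes $d(S^{3}_{1}(K_{D}),s_{0})=-2$ (Proposition~\ref{c} in the appendix), which requires Hedden's computation of $\widehat{HFK}$ of Whitehead doubles of $L$-space knots together with a filtered chain-complex argument. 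The value $-2$, being independent of $p$, is what makes the strict inequality $d(S^{3}_{p}(C_{p,1}(K_{D})),s\sharp s_{0})\leq d(L(p,1),s)-2<d(L(p,1),s)$ hold for every $p\geq 1$ simultaneously --- the uniformity you worried might be ``accidentally cancelled'' is not an accident but a consequence of this computation. The one point you did settle correctly is that $-p\in\Omega$ is automatic; the rest of your proposal is a research plan whose hard steps remain open.
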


There is a formula that expresses the Alexander polynomial of a satellite knot in terms of those of its companion and its pattern, the statement and the proof of which can be found in \cite[Theorem~6.15]{MR1472978}. By this formula we see that the knot $K_{D}$ and the knot $C_{p,1}(K_{D})$ both have trivial Alexander polynomial, for any $p\geq 1$. The knot $C_{p,1}(K_{D})$ is therefore a topologically slice knot based on Freedman's work \cite[Theorem~11.7B]{MR1201584}. Consequently, we have $u^{T}_{\pm}(C_{p,1}(K_{D}))=0$. In the following paragraphs, we show that $u^{S}_{+}(C_{p,1}(K_{D}))$, and therefore $u_{+}(C_{p,1}(K_{D}))$, are non-zero. We remark that Donaldson's diagonalization theorem, used as a tool to investigate the signed unknotting number by Cochran and Lickorish in \cite{MR849471}, appears to be difficult to apply here to $C_{p,1}(K_{D})$.

\begin{figure}
	\centering
		\includegraphics[width=0.5\textwidth]{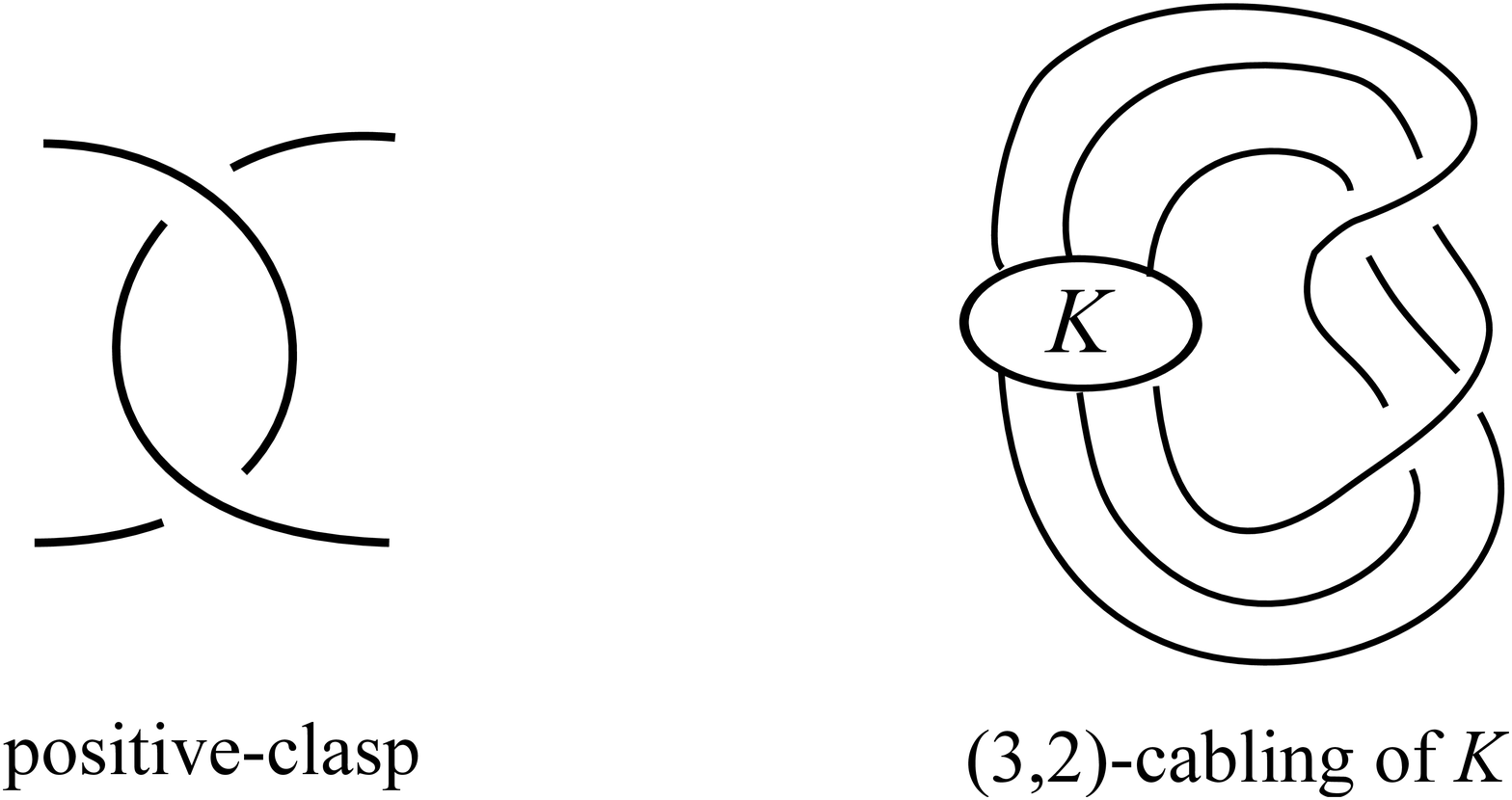}
	\caption{}
	\label{fig:f3}
\end{figure}

\begin{proof}
For any integer $p\geq 1$, it is known from \cite[Corollary~7.3]{MR682725} that 
$$S^{3}_{p}(C_{p,1}(K_{D}))=L(p,1)\sharp S^{3}_{1/p}(K_{D}).$$
Therefore by the additivity of $d$-invariants we see that $$d(S^{3}_{p}(C_{p,1}(K_{D})), s\sharp s_{0})=d(L(p,1), s)+d(S^{3}_{1/p}(K_{D}), s_{0}),$$
 where $s_{0}$ denotes the unique spin$^{c}$-structure over $S^{3}_{1/p}(K_{D})$, and $s$ can be any spin$^c$-structure over $L(p,1)$.
 
 \begin{figure}
	\centering
		\includegraphics[width=0.8\textwidth]{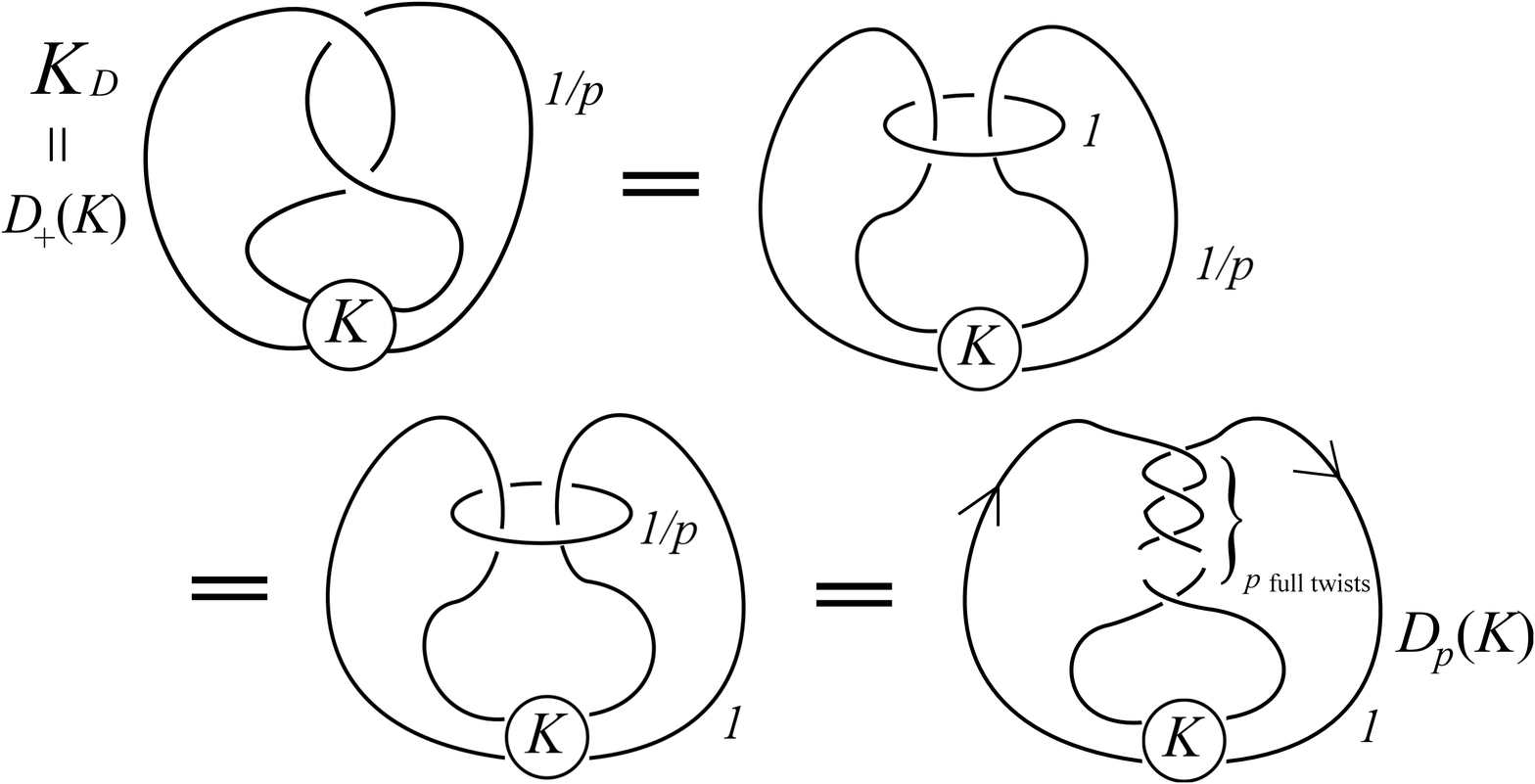}
	\caption{}
	\label{fig:f4}
\end{figure}

Let $D_{p}(K)$ be the knot as shown in Figure~\ref{fig:f4}. As we can see from Figure~\ref{fig:f4}, it holds that $S^{3}_{1/p}(K_{D})\cong S_{1}^{3}(D_{p}(K))$. On the other hand, the knot $D_{p}(K)$ can be transformed into the knot $K_{D}$ by performing only positive crossing changes. By (\ref{3}) we have
$$d(S^{3}_{1/p}(K_{D}), s_{0})=d(S_{1}^{3}(D_{p}(K)), s_{0})\leq d(S^{3}_{1}(K_{D}), s_{0}).$$

We assign the calculation of $d(S^{3}_{1}(K_{D}), s_{0})$ to the appendix (see Proposition~\ref{c}).
As a conclusion, we have $d(S^{3}_{1}(K_{D}), s_{0})=-2$. See Appendix A for more details about the calculation. Therefore it holds that $$d(S^{3}_{p}(C_{p,1}(K_{D})), s\sharp s_{0})\leq d(L(p,1), s)-2< d(L(p,1), s)$$ for any $s\in \mathrm{Spin}^{c}(L(p,1))$, which implies that $$d(S^{3}_{-p}(\overline{C_{p,1}(K_{D})}), s\sharp s_{0})> d(L(-p,1), s)$$ for any $s\in \mathrm{Spin}^{c}(L(-p,1))$. Since $-p\in \Omega$, then by Theorem~\ref{main} we conclude that $u_{+}({C_{p,1}(K_{D})})=u_{-}(\overline{C_{p,1}(K_{D})})\neq 0$. Namely, $C_{p,1}(K_{D})$ can never be unknotted by only performing negative crossing changes.

\end{proof}

\bibliographystyle{siam}
\bibliography{5223}

\begin{appendix}
\section{}
The purpose of this appendix is to prove the following property. We follow the method used in \cite{MR2955197}.
\begin{prop}
\label{c}
Let $K\subset S^{3}$ be a non-trivial knot which admits a positive integral $L$-space surgery, and $K_{D}:=D_{+}(K)$ be the positive clasped untwisted  Whitehead double of $K$. Then $d(S^{3}_{1}(K_{D}), s_{0})=-2$.
\end{prop}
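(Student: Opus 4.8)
The plan is to compute $d(S^3_1(K_D), s_0)$ by relating the Whitehead double $K_D$ to its companion $K$, exploiting the fact that $S^3_1(K_D)$ can be understood as surgery on a two-component link and that $K$ admits an $L$-space surgery. First I would express $S^3_1(K_D)$ via the standard surgery description of the untwisted Whitehead double: the double $K_D$ sits inside a solid-torus neighborhood of $K$, and performing $+1$-surgery on $K_D$ can be reorganized into a surgery on a link consisting of $K$ (with an appropriate framing coming from the clasp) together with the pattern circle. Concretely, I expect $S^3_1(K_D)$ to be realizable as a surgery whose Floer homology is governed by the large-surgery formula applied to $K$, so that the relevant $d$-invariant is controlled by $d(S^3_n(K), \cdot)$ for large $n$ and hence by the integers $t_i(K)$ from Theorem~\ref{lemma}.

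The key steps, in order, are as follows. First, using the surgery presentation and a Kirby-calculus manipulation (following the method of \cite{MR2955197}), I would reduce the computation of $HF^+(S^3_1(K_D))$ to a mapping-cone or large-surgery computation involving the knot Floer homology of $K$. Second, I would invoke that $K$ admits a positive integral $L$-space surgery, so that $CFK^\infty(K)$ is a \emph{staircase} complex and the torsion coefficients $t_i(K)$ are completely determined by the Alexander polynomial and the genus; in particular $t_0(K)>0$ for a nontrivial such $K$, since $g(K)$ equals the degree of $\Delta_K$ by \cite{MR2168576}. Third, I would feed this staircase data into the formula for the $d$-invariant of the relevant $\pm 1$-surgery on the doubled knot, tracking the correction terms through the cobordism maps and the grading shifts, to extract the single number $d(S^3_1(K_D), s_0)$. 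The expectation, dictated by the $+$-clasp and the untwisted framing, is that all the $L$-space staircase contributions combine to yield exactly $-2$.

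The main obstacle I anticipate is the second-to-last step: correctly identifying the Floer homology of $S^3_1(K_D)$ in terms of $CFK^\infty(K)$. The Whitehead double is a \emph{satellite} with winding number zero, so the naive Alexander-polynomial data of $K_D$ is trivial (indeed $\Delta_{K_D}=1$), and the nontrivial smooth information is hidden in the full filtered complex of $K$ rather than in $\Delta_{K_D}$. Extracting $d(S^3_1(K_D),s_0)$ therefore requires either a bordered-Floer satellite computation or the explicit surgery-formula approach of \cite{MR2955197}, and the bookkeeping of gradings and spin$^c$-structures through the associated cobordisms is where errors are most likely. Once the staircase structure of $CFK^\infty(K)$ coming from the $L$-space surgery hypothesis is in hand, however, the final evaluation should collapse to a short computation giving $d(S^3_1(K_D), s_0)=-2$, uniformly over all such companions $K$.
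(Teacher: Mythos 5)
Your proposal is a plan rather than a proof, and the step you yourself flag as the ``main obstacle'' is exactly the step that is missing: you never actually identify $HF^{+}(S^{3}_{1}(K_{D}), s_{0})$, and the route you sketch for doing so is not the one that works. The paper does not realize $S^{3}_{1}(K_{D})$ as surgery on a link containing $K$, nor does it use any mapping-cone or link-surgery formula; the key external input is Hedden's theorem (Theorem~\ref{xi}), which computes $\widehat{HFK}_{l}(S^{3}, K_{D}, j)$ for $j=1,0,-1$ in terms of the groups $H_{*}(\mathcal{F}(K,m))$ and gives $\tau(K_{D})=1$. The $L$-space hypothesis on $K$ enters only through Lemma~\ref{ruo}, i.e.\ the vanishing $H_{l}(\mathcal{F}(K,j))=0$ for $l>0$ coming from the Ozsv{\'a}th--Szab{\'o} structure theorem for knots with $L$-space surgeries --- not through the staircase or torsion-coefficient data you propose to feed into a surgery formula. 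With Hedden's theorem in hand, the paper applies the large surgery formula (Theorem~\ref{cal}) to $K_{D}$ itself, reduces $CFK^{\infty}(S^{3}, K_{D})$ to a basis of $\widehat{HFK}(S^{3}, K_{D})$, exhibits a cycle $\rho=[x,0,1]$ of Maslov grading $0$ generating $HF^{\infty}_{0}(S^{3})$, and then shows by a chain-level argument (Lemma~\ref{lan} together with the contradiction argument in the proof of Proposition~\ref{c}) that $\rho'=U\rho$ remains nontrivial in $H(C\lbrace \max(i,j)\geq 0\rbrace)$, which pins down $d(S^{3}_{r}(K_{D}),s_{0})-(r-1)/4=\operatorname{gr}(\rho')=-2$.

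Beyond the missing input, your guiding expectation is wrong in a substantive way: $d(S^{3}_{1}(K_{D}), s_{0})$ is \emph{not} controlled by $d(S^{3}_{n}(K),\cdot)$ for large $n$, hence not by the integers $t_{i}(K)$ of Theorem~\ref{lemma}. The answer $-2$ is uniform over all nontrivial $L$-space companions, independent of the particular staircase of $K$; indeed the proposition itself (where $\Delta_{K_{D}}=1$, so all torsion coefficients of $K_{D}$ vanish while $d=-2\neq 0$) is precisely a demonstration that $d$-invariants of surgeries on winding-number-zero satellites are invisible to Alexander-polynomial and torsion-coefficient data. What actually matters is $\tau(K_{D})=1$ together with the vanishing of $\widehat{HFK}(S^{3},K_{D})$ in Maslov gradings above those listed in Theorem~\ref{xi} --- structural facts supplied by Hedden's computation --- and then a careful nontriviality argument in the quotient complex $C\lbrace \max(i,j)\geq 0\rbrace$. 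Without Theorem~\ref{xi} (or an equivalent bordered-Floer computation of the double, which you mention as a possibility but do not carry out), your outline cannot produce the number $-2$.
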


\subsection{Preliminaries}
For an oriented rational homology 3-sphere $Y$ and a spin$^{c}$-structure $s\in \mathrm{Spin}^{c}(Y)$, there are several versions of Heegaard Floer homology. The most general Heegaard Floer complex is denoted by $CF^{\infty}(Y, s)$, which is generated by $[x, i]\in I_{s}\times \mathbb{Z}$ for a finite set $I_{s}$. By identifying $[x, i]$ with $U^{-i}x$, the chain group can be thought of as the free $\mathbb{F}[U, U^{-1}]$-module generated by elements in $I_{s}$. Here $U$ is a formal polynomial variable. The chain complex $CF^{\infty}(Y, s)$ has a grading called Maslov grading, which works as the homological grading. Multiplying by $U$ (respectively, $U^{-1}$) decreases (respectively, increases) the Maslov grading by two. The differential on $CF^{\infty}(Y, s)$ decreases the Maslov grading by one and respects the ascending filtration induced by the map $A:I_{s}\times \mathbb{Z}\rightarrow  \mathbb{Z}$ sending $[x, i]$ to $i$. 

Denote the homology of $CF^{\infty}(Y, s)$ by $HF^{\infty}(Y,s)$. It was showed in \cite[Theorem 10.1]{MR2113020} that if $Y$ is a rational homology 3-sphere, then $HF^{\infty}(Y,s)\cong \mathbb{F}[U, U^{-1}]$ for any $\mathrm{spin}^{c}$-structure $s\in\mathrm{Spin}^{c}(Y)$.

Ozsv{\'a}th and Szab{\'o} also defined the complexes $CF^{-}(Y,s)$, $CF^{+}(Y,s)$ and $\widehat{CF}(Y,s)$. Here $CF^{-}(Y,s)$ is a subcomplex of $CF^{\infty}(Y,s)$ generated by $[x, i]$ with $i<0$, and $CF^{+}(Y,s)$ is the quotient complex $CF^{\infty}(Y,s)/CF^{-}(Y,s)$, and thus is generated by $[x, i]$ with $i\geq 0$. The third complex, $\widehat{CF}(Y,s)$, is generated by $[x, 0]$, and has the induced differential from $CF^{\infty}(Y, s)$. It was proved in \cite{MR2113019} that the homology groups of these four complexes are topological invariants of the pair $(Y,s)$. Note that all four homology groups are also modules over $\mathbb{F}[U]$, with the module structure naturally induced from that of $CF^{\infty}(Y,s)$.

Now we focus on the case when $Y=S^{3}$ and $s=s_{0}$, the unique spin$^{c}$-structure over $S^{3}$. The generators of $CF^{\infty}(S^{3}, s_{0})$ are of the form $[x,i]\in I\times \mathbb{Z}$. Ozsv{\' a}th and Szab{\'o} \cite{MR2065507}, and independently Rasmussen \cite{MR2704683} found that a knot $L\subset S^{3}$ induces a filtration, called Alexander filtration, to $CF^{\infty}(S^{3}, s_{0})$, and then defined the filtered chain complex $CFK^{\infty}(S^{3}, L)$. 

Given a generator $[x, i]\in CF^{\infty}(S^{3}, s_{0})$, the Alexander filtration of $[x, i]$ is defined as
$$F([x, i]):=\dfrac{1}{2}(\langle c_{1}(\underline{\mathfrak{s}}(x)), [S]\rangle+2i).$$
Here $\underline{\mathfrak{s}}(x)\in \mathrm{Spin}^{c}(S^{3}_{0}(L))$ (a map $\underline{\mathfrak{s}}:I\rightarrow \mathrm{Spin}^{c}(S^{3}_{0}(L))$ was defined in \cite{MR2065507} and \cite{MR2704683}), and $[S]\in H_{2}(S^{3}_{0}(L); \mathbb{Z})$ is the homology class arising from extending a Seifert surface of $L$ by the meridional disk of the surgery torus. 

The complex $CFK^{\infty}(S^{3}, L)$ can be regarded as a module generated by $[x, i, j]\in I\times \mathbb{Z}\times \mathbb{Z}$ with $j=F([x, i])$. It has a natural $\mathbb{Z}\oplus \mathbb{Z}$ filtration induced by the map $A: I\times \mathbb{Z}\times \mathbb{Z} \rightarrow \mathbb{Z}\oplus \mathbb{Z}$ sending a generator $[x,i,j]$ to $(i,j)$. By identifying $[x, i, j]$ with $U^{-i}[x, 0, j-i]$, the chain group $CFK^{\infty}(S^{3}, L)$ can be thought of as the free $\mathbb{F}[U, U^{-1}]$-module generated by elements of the form $[x, 0, F([x, 0])]$. The Maslov grading on $CFK^{\infty}(S^{3}, L)$ is determined by requiring that $\operatorname{gr}([x, i, j])=\operatorname{gr}([x, i])$, where $\operatorname{gr}([x, i])$ is the Maslov grading in $CF^{\infty}(S^{3}, s_{0})$. The differential of 
$CFK^{\infty}(S^{3}, L)$ decreases Maslov grading by one and respects the $\mathbb{Z}\oplus \mathbb{Z}$ filtration induced by the map $A$, i.e., $A(\partial ([x, i, j]))\leq A([x, i, j])$, where ``$\leq$" is the standard partial order on $\mathbb{Z}\oplus \mathbb{Z}$. See Figure~\ref{fig:f9} for the case when $L$ is the right-handed trefoil knot.

The homology of the filtered chain complex $CFK^{\infty}(S^{3}, L)$ is a topological invariant of $L$, which is called the $\infty$-version of the knot Floer homology of $L$. Consider the subquotient complex $C\lbrace i=0\rbrace\subset CFK^{\infty}(S^{3}, L)$ generated by those generators of the form $[x, 0, j]$. Then $C\lbrace i=0\rbrace$ is again a filtered chain complex with the Alexander filtration induced by $[x, 0, j]\rightarrow j$, and the differential on $C\lbrace i=0\rbrace$ respects this filtration. Let $\mathcal{F}(K, j)\subset C\lbrace i=0\rbrace$ be the filtered subcomplex generated by those generators $[x, 0, m]$ with $m\leq j$. The associated graded homology $\widehat{HFK}(S^{3}, L, j):=H(\mathcal{F}(K, j)/\mathcal{F}(K, j-1))$ is called the hat version of the knot Floer homology of $L$. It is often denoted by $\widehat{HFK}_{l}(S^{3}, L, j)$ the subgroup of $\widehat{HFK}(S^{3}, L)$ whose elements have Maslov grading $l$ and Alexander filtration $j$.

 \begin{figure}
	\centering
		\includegraphics[width=0.35\textwidth]{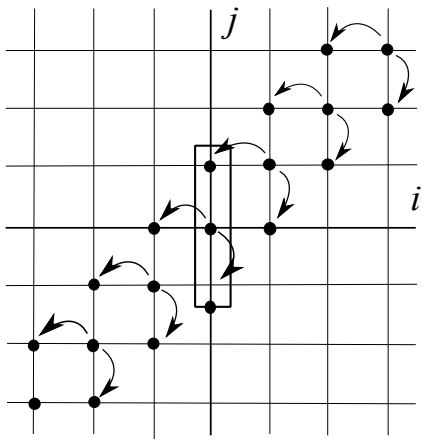}
	\caption{Let $T$ be the right-handed trefoil knot. There is a pointed Heegaard diagram for $(S^{3}, T)$, for which the complex $CFK^{\infty}(S^{3}, T)$ is described as in the figure. The dots represent generators of $CFK^{\infty}(S^{3}, T)$. Generators on the top (respectively middle and bottom) diagonal are of the form $[x, i, i+1]$ (respectively $[y, i, i]$ and $[z, i, i-1]$). Curved arrows indicate the differential of $CFK^{\infty}(S^{3}, T)$. From the figure, we see that $\partial ([y, i, i]=[x, i-1, i]+[z, i, i-1])$. The filtered chain complex $C\lbrace i=0\rbrace$ (encircled by the rectangle) in this case is generated by $[x, 0, 1]$, $[y, 0, 0]$ and $[z, 0, -1]$, and the differential goes from $[y, 0, 0]$ to $[z, 0, -1]$. The associated graded homology $\widehat{HFK}(S^{3}, T)$ thus has rank three.} 
	\label{fig:f9}
\end{figure}

If we forget the Alexander filtration on $CFK^{\infty}(S^{3}, L)$ and $C\lbrace i=0\rbrace$, the homology groups of these two chain complexes are isomorphic to ${HF}^{\infty}(S^{3})\cong \mathbb{F}[U, U^{-1}]$ and $\widehat{HF}(S^{3})\cong \mathbb{F}$. Then Ozsv{\'a}th and Szab{\'o}'s $\tau$-invariant of a knot is defined as follows.
$$\tau (K):=\min \lbrace j \vert\iota_{*}: H(\mathcal{F}(K, j))\rightarrow \widehat{HF}(S^{3}) \text{ is non-trivial }\rbrace.$$ The homomorphism $\iota_{*}$ is induced from the inclusion map. 

Given a knot $L\subset S^{3}$ and a positive integer $r$, we can construct a cobordism from $S^{3}$ to $S^{3}_{r}(K)$ by attaching a 2-handle of framing $r$ along $L$. Let $s_{0}\in \mathrm{Spin}^{c}(S^{3}_{r}(K))$ be the unique $\mathrm{spin}^{c}$-structure that extends to a $\mathrm{spin}^{c}$-structure $t_{0}$ over the cobordism, satisfying $\langle c_{1}(t_{0}), [S_{r}]\rangle +r=0$. Here $[S_{r}]$ is the homology class of a surface in the cobordism obtained by closing off a Seifert surface of $L$ by the core of the 2-handle. 

The following theorem was known.

\begin{theo}[\cite{MR2065507}, Theorem~4.4]
\label{cal}
Let $L\subset S^{3}$ be a knot of Seifert genus $g$. Then there is a chain homotopy equivalence of graded complexes over $\mathbb{F}[U]$,
$$CF^{+}_{l}(S^{3}_{r}(L), s_{0})\cong C_{l+(1-r)/4}\lbrace \max{} (i, j)\geq 0\rbrace,$$ for any integer $r\geq 2g-1$. Here $ C\lbrace \max{} (i, j)\geq 0\rbrace$ is the quotient complex of $CFK^{\infty}(S^{3}, L)$ generated by those generators $[x, i, j]$ with $\max{} (i, j)\geq 0$.
\end{theo}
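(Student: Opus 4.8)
The plan is to prove this \emph{large surgery formula} by the standard winding-region construction: I would adapt a doubly-pointed Heegaard diagram of $(S^{3}, L)$ to the surgered manifold $S^{3}_{r}(L)$, match generators and differentials region by region, and read off the grading shift from the model case of the unknot. First I would fix a genus-$g$ doubly-pointed Heegaard diagram $(\Sigma, \{\alpha_{1}, \dots, \alpha_{g}\}, \{\beta_{1}, \dots, \beta_{g}\}, w, z)$ for $(S^{3}, L)$ in which $\beta_{g}$ is a small meridian of $L$ separating the two basepoints $w$ and $z$. To realize $r$-surgery I replace $\beta_{g}$ by a curve $\gamma$ representing the $r$-framed longitude, obtained from a longitude parallel to $L$ by winding it $r$ times through the meridional region; the diagram $(\Sigma, \{\alpha_{1}, \dots, \alpha_{g}\}, \{\beta_{1}, \dots, \beta_{g-1}, \gamma\}, w)$ is then a Heegaard diagram for $S^{3}_{r}(L)$, and $\mathrm{spin}^{c}$-structures are recorded by the relative position of the basepoints.

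Next I would identify the generators. In the winding region $\gamma$ meets the relevant $\alpha$-curve in $r$ new intersection points, which I index by an integer $k$; away from the winding region the intersection data is essentially that of the knot diagram. Each generator of $CF^{\infty}(S^{3}_{r}(L))$ is thus recorded as a pair consisting of an intersection point $\mathbf{x}$ of the knot diagram and a winding index $k$, and the $\mathrm{spin}^{c}$-decomposition of the surgered manifold is read off from these two pieces of data. The distinguished structure $s_{0}$, characterized on the two-handle cobordism by $\langle c_{1}(t_{0}), [S_{r}]\rangle + r = 0$, selects a specific band of indices $k$. Tracking both basepoints through this bijection, I would show that the generators of $CF^{+}(S^{3}_{r}(L), s_{0})$ correspond exactly to the generators $[\mathbf{x}, i, j]$ of $CFK^{\infty}(S^{3}, L)$ with $\max(i,j) \geq 0$, i.e. to the subquotient complex $C\{\max(i,j) \geq 0\}$.

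I would then match the differentials. Each holomorphic disk contributing to $\partial$ in the surgered diagram has a domain that either avoids the winding region or sweeps across it: disks that avoid it reproduce the internal differentials of $CFK^{\infty}$, while disks that cross the winding region reproduce exactly the $U$-power and filtration-changing components that are built into the definition of $C\{\max(i,j) \geq 0\}$. The genus hypothesis $r \geq 2g-1$ enters precisely here. Because $\widehat{HFK}(S^{3}, L, j)$ is supported in $|j| \leq g$, choosing $r$ this large makes the winding region so wide that no Whitney disk can wrap around it more than once; this forces a clean, orientation-preserving bijection between the relevant moduli spaces in the two diagrams, so that the differentials agree on the nose rather than merely up to lower-order corrections. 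This disk count in the winding region, rather than the bookkeeping of generators, is the technical heart of the argument and the step I expect to be the main obstacle: one must rule out the broken and multiply-wrapping configurations that would otherwise contribute spurious terms, and verify that $r \geq 2g-1$ is exactly the threshold that eliminates them.

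Finally I would pin down the grading shift $(1-r)/4$. The complex $C\{\max(i,j)\geq 0\}$ carries the internal Maslov grading inherited from $CFK^{\infty}(S^{3}, L)$, and the shift is the constant needed to match this with the absolute $\mathbb{Q}$-grading on $HF^{+}(S^{3}_{r}(L), s_{0})$. I would fix the normalization using the model case $L = O$: there $CFK^{\infty}(S^{3}, O) = \mathbb{F}[U, U^{-1}]$ has its generator at $(i,i)$ in grading $2i$, so $C\{\max(i,j)\geq 0\}$ is a single tower based at grading $0$, while $S^{3}_{r}(O) = L(r,1)$ has $d(L(r,1), s_{0}) = (r-1)/4$; comparing these bottom gradings yields the shift $(1-r)/4$, and since the generator correspondence of the previous paragraphs is grading-preserving up to one additive constant, the same shift propagates to arbitrary $L$.
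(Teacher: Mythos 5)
This theorem is not proved in the paper at all: it is imported verbatim from Ozsv{\'a}th--Szab{\'o} \cite{MR2065507}, Theorem~4.4, so there is no internal proof to compare against. Your sketch reconstructs the original winding-region argument of that cited reference --- generator matching in the winding region, $\mathrm{spin}^{c}$ bookkeeping via the two basepoints, differential comparison for large $r$, and fixing the grading shift $(1-r)/4$ on the model case of the unknot, where $d(L(r,1),s_{0})=(r-1)/4$ --- which is essentially the same approach as the source's proof.
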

 
According to the discussion in \cite{Peters} we have $d(S^{3}_{1}(L), s_{0})=d(S^{3}_{r}(L), s_{0})-(r-1)/4$ for any positive integer $r$. In order to prove Proposition~\ref{c}, it is sufficient to calculate $d(S^{3}_{r}(L), s_{0})$ for some sufficiently large $r$. The calculation can be, indeed, fulfilled by applying Theorem~\ref{cal}. 

\subsection{Proof of Proposition~\ref{c}} 

We recall a lemma in \cite{MR2704683}, which is well known in reducing a filtered complex.

\begin{lemma}[Lemma~5.1 in \cite{MR2704683}]
\label{reducing}
Suppose $(C,\partial)$ is a chain complex freely generated by elements $y_{i}$ over a field, and write $d(y_{i}, y_{j})$ for the $y_{j}$ coordinate of $\partial (y_{i})$. Then if $d(y_{k}, y_{l})=1$, we can define a new chain complex $(\bar{C}, \bar{\partial})$ with generators $\lbrace y_{i}\vert i\neq k, l\rbrace$ and differential $$\bar{\partial}(y_{i})=\partial (y_{i})+d(y_{i}, y_{l})\partial(y_{k}).$$ The chain complex $(\bar{C}, \bar{\partial})$ is chain homotopy equivalent to the first one. Moreover, the chain homotopy equivalence $\pi: C\rightarrow \bar{C}$ is the projection, and the equivalence $\iota : \bar{C}\rightarrow C$ sends $y_{i}$ to $y_{i}-d(y_{i}, y_{l})\partial(y_{k})$.
\end{lemma}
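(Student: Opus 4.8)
The plan is to recognize this as the standard cancellation (``Gaussian elimination'') reduction for a chain complex over a field, in which the hypothesis $d(y_k,y_l)=1$ supplies the invertible matrix entry along which one cancels. The first thing I would record is that, since $d(y_k,y_l)=1$, the element $z:=\partial y_k$ has $y_l$-coefficient $1$, so $\{y_i\mid i\neq k,l\}\cup\{y_k,z\}$ is again a basis of $C$. In this adapted basis one has $\partial y_k=z$ and $\partial z=\partial^2 y_k=0$, so $D:=\langle y_k,z\rangle$ is a subcomplex with vanishing homology. Because the ground ring is a field, an acyclic subcomplex is contractible and splits off up to chain homotopy; hence the quotient projection $C\to C/D$ is a chain homotopy equivalence. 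I would then identify $C/D$, equipped with its induced differential, with $(\bar C,\bar\partial)$: the induced differential sends the class of $y_i$ to the class of $\partial y_i$, and rewriting $y_l=z-b_k$, where $b_k$ is the part of $\partial y_k$ supported on the generators $y_j$ with $j\neq k,l$, shows this class is represented by $\partial y_i + d(y_i,y_l)\partial y_k$ modulo $D$, which is the stated formula for $\bar\partial(y_i)$.

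In order to produce the explicit projection and section maps named in the statement, I would instead verify the three claims directly. Checking that $\bar\partial$ is a differential, that $\pi$ and $\iota$ are chain maps, and that they are mutually inverse up to homotopy, each reduces to expanding the coefficient identities packaged in $\partial^2=0$; the two that do all the work are obtained by reading off the $y_l$-coefficient of $\partial^2 y_i=0$ and of $\partial^2 y_k=0$, namely $\sum_{j\neq k,l}d(y_i,y_j)d(y_j,y_l)=-d(y_i,y_k)$ and $\sum_{j\neq k,l}d(y_k,y_j)d(y_j,y_l)=0$ (here one uses that no generator occurs in its own boundary, so $d(y_k,y_k)=d(y_l,y_l)=0$). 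To establish $\iota\pi\simeq\mathrm{id}_C$ I would write down the homotopy $H$ determined by $H(z)=y_k$ and $H=0$ on every other vector of the adapted basis, and verify $\partial H+H\partial=\mathrm{id}_C-\iota\pi$ generator by generator; the only case that is not immediate is $z$ itself, where the required identity is exactly $\partial H(z)=\partial y_k=z$.

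The step I expect to be the real obstacle is the bookkeeping of the ``cross terms''. Because $\partial y_k$ is in general not equal to $y_l$ but only has $y_l$ among its summands, the naive homotopy $y_l\mapsto y_k$ leaves an error supported on $b_k=\partial y_k-y_l$, and one must either pass to the adapted basis, in which the pair $(y_k,z)$ cancels cleanly, or absorb the error using the two identities above. Keeping careful track of which components live in $C$ as opposed to $\bar C$ --- in particular confirming that the $y_k$- and $y_l$-components of $\bar\partial(y_i)$ vanish, so that $\bar\partial$ genuinely lands in $\bar C$ --- is where essentially all the effort goes. Throughout, the hypothesis that we work over a field, so that the entry $d(y_k,y_l)=1$ is invertible, is exactly what makes the cancellation legitimate.
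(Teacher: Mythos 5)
The paper offers no proof of this lemma at all --- it is quoted verbatim as Lemma~5.1 of Rasmussen's thesis \cite{MR2704683} --- so there is no in-paper argument to compare with and your proposal must stand on its own. Its first paragraph does stand: passing to the adapted basis $\lbrace y_i : i\neq k,l\rbrace\cup\lbrace y_k, z\rbrace$ with $z=\partial y_k$, observing that $D=\langle y_k,z\rangle$ is a contractible subcomplex, invoking the fact that over a field the quotient by an acyclic subcomplex is a chain homotopy equivalence, and identifying the induced differential on $C/D\cong\bar C$ with $\bar\partial$ is the standard Gaussian-elimination argument, and it completely proves the equivalence claim. Note that it is precisely the passage to $C/D$ that makes the formula for $\bar\partial$ meaningful, since working modulo $D$ kills the $y_k$-direction as well as the $y_l$-direction.

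The direct verification sketched in your remaining paragraphs, however, would not go through, and it breaks exactly at the places you label as routine. First, the $y_k$-component of $\partial(y_i)+d(y_i,y_l)\partial(y_k)$ equals $d(y_i,y_k)$, which need not vanish, so the confirmation you promise (that ``the $y_k$- and $y_l$-components of $\bar\partial(y_i)$ vanish'') is a confirmation of a false statement; the formula only defines a map into $\bar C$ after projecting $y_k$ away, which your quotient does automatically but a literal reading does not. Second, and more seriously, the section $\iota(y_i)=y_i-d(y_i,y_l)\,\partial(y_k)$ appearing in the statement is \emph{not} a chain map whenever some $d(y_i,y_k)\neq 0$: over $\mathbb{F}=\mathbb{Z}/2\mathbb{Z}$ take generators $y_1,y_2,y_k,y_l$ with $\partial y_1=y_k+y_2$, $\partial y_2=y_l$, $\partial y_k=y_l$, $\partial y_l=0$; then $\bar\partial y_1=y_2$, $\iota(y_2)=y_2+y_l$, and $\partial\iota(y_1)=y_k+y_2$ while $\iota\bar\partial(y_1)=y_2+y_l$. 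Accordingly, your homotopy $H(z)=y_k$ gives $(\partial H+H\partial)(y_i)=d(y_i,y_l)\,y_k$, whereas $(\operatorname{id}-\iota\pi)(y_i)=d(y_i,y_l)\,\partial(y_k)$, so the identity fails on every $y_i$ with $d(y_i,y_l)\neq 0$; the case of $z$, which you single out as the only delicate one, is in fact the trivial one. Everything is repaired by taking $\iota(y_i)=y_i-d(y_i,y_l)\,y_k$ (correction by $y_k$, not by $\partial y_k$) and by reading ``the projection'' as the projection with respect to the adapted basis, which sends $y_l$ to $y_l-\partial(y_k)$ rather than to $0$: with these maps $\pi$ and $\iota$ are chain maps, $\pi\iota=\operatorname{id}$, and your $H$ does satisfy $\partial H+H\partial=\operatorname{id}-\iota\pi$ on every adapted basis vector. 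In short, your first paragraph proves the equivalence, but the explicit maps in the ``moreover'' clause require this correction, and the checks you assert would succeed are checks that fail; carrying them out honestly is what would have revealed the discrepancy.
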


For those knots which admit positive integral $L$-space surgeries, the knot Floer homology of them was described by Ozsv{\'a}th and Szab{\'o} as follows. 
\begin{theo}[Ozsv{\'a}th and Szab{\'o} \cite{MR2168576}]
Suppose that $K\subset S^{3}$ is a knot for which there is a positive integer $r$ such that $S^{3}_{r}(K)$ is an $L$-space. Then there is an increasing sequence of integers $n_{-k}<\cdots <n_{k}$ with the property that $n_{i}=-n_{-i}$, having the following significance. For $-k\leq i \leq k$, let 
\begin{equation*}
\delta_{i}=\begin{cases}
0 & \text{if $i=k$,} \\
\delta_{i+1}-2(n_{i+1}-n_{i})+1 & \text{if $k-i$ is odd,} \\
\delta_{i+1}-1 & \text{if $k-i>0$ is even.}
\end{cases}
\end{equation*}
Then 
\begin{equation*}
\widehat{HFK}_{l}(S^{3}, K, j)=\begin{cases}
\mathbb{Z} & \text{if $j=n_{i}$ and $l=\delta_{i}$ for some $i$,} \\
0 & \text{otherwise.}
\end{cases}
\end{equation*}
\end{theo}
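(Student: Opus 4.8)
The plan is to read off the structure of $\widehat{HFK}(S^{3}, K)$ from the strong constraints that a positive $L$-space surgery imposes on the filtered complex $CFK^{\infty}(S^{3}, K)$, and then to pin down the gradings using the conjugation symmetry of knot Floer homology. First I would reduce to large integral surgeries. By the monotonicity recalled in the proof of Proposition~\ref{prop}, if $S^{3}_{r_{0}}(K)$ is an $L$-space for one rational $r_{0}>0$, then $S^{3}_{N}(K)$ is an $L$-space for every integer $N\geq 2g(K)-1$, so it suffices to understand these large surgeries.

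Next I would feed this into the large surgery formula. The spin$^{c}$-refined version of Theorem~\ref{cal} identifies, for $N\geq 2g(K)-1$ and each spin$^{c}$-structure $[s]$ with $|s|\leq N/2$, the group $HF^{+}(S^{3}_{N}(K), [s])$ with the homology of the subquotient complex $A^{+}_{s}=C\lbrace \max (i, j-s)\geq 0\rbrace$ of $CFK^{\infty}(S^{3}, K)$, up to an overall grading shift. Since each $S^{3}_{N}(K)$ is an $L$-space, every $HF^{+}(S^{3}_{N}(K), [s])$ is a single tower isomorphic to $HF^{+}(S^{3})$, so the whole input becomes the purely algebraic statement that $H_{*}(A^{+}_{s})\cong HF^{+}(S^{3})$ for all $s$, together with $H_{*}(CFK^{\infty})\cong HF^{\infty}(S^{3})\cong \mathbb{F}[U, U^{-1}]$.

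I would then extract the shape of the complex. Applying Lemma~\ref{reducing} repeatedly, I cancel every differential that is invertible over $\mathbb{F}$, replacing $CFK^{\infty}$ by a chain-homotopy-equivalent filtered complex whose generators are in bijection with a basis of $\widehat{HFK}(S^{3}, K)$. The constraints collected above --- rank-one total homology over $\mathbb{F}[U, U^{-1}]$, and $H_{*}(A^{+}_{s})\cong HF^{+}(S^{3})$ for every $s$ --- force this reduced complex to be a \emph{staircase}: a single zig-zag $x_{0}, x_{1}, \dots, x_{2k}$ in which the differential of each odd-indexed generator hits its two neighbours, with horizontal and vertical arrows alternating. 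In particular each Alexander grading supporting a generator supports exactly one, so $\widehat{HFK}_{l}(S^{3}, K, j)$ is $\mathbb{Z}$ or $0$, and the Alexander gradings of the generators give the increasing sequence $n_{-k}<\cdots <n_{k}$ with successive gaps $n_{i+1}-n_{i}$ equal to the step lengths. I expect this step --- deducing the staircase shape from the $L$-space condition on all the $A^{+}_{s}$ simultaneously --- to be the main obstacle, since it is the one place where one must control the filtered homological algebra rather than quote a formula.

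Finally I would fix the symmetry and the gradings. The conjugation symmetry $\widehat{HFK}_{l}(S^{3}, K, j)\cong \widehat{HFK}_{l-2j}(S^{3}, K, -j)$ forces the list of Alexander gradings to be symmetric about $0$, giving $n_{i}=-n_{-i}$; combined with the fact that the top nonzero Alexander grading equals the Seifert genus, this gives $n_{k}=g(K)$. Normalizing the top generator to Maslov grading $\delta_{k}=0$ and descending the staircase, one bookkeeps the gradings of the successive generators using that each differential lowers the Maslov grading by one while the $U$-translation lowers it by two; the two alternating arrow types of the staircase correspond exactly to the cases $k-i$ odd and $k-i$ even, and carrying out this routine computation reproduces the stated recursion for $\delta_{i}$. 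Assembling the rank statement, the Alexander gradings $n_{i}$, and these Maslov gradings then yields the claimed description of $\widehat{HFK}_{l}(S^{3}, K, j)$.
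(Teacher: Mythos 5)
First, a point of order: the paper does not prove this statement at all. It is quoted, with attribution, as a theorem of Ozsv\'ath and Szab\'o from \cite{MR2168576}, so there is no internal proof to compare against; your proposal has to be measured against the argument in that cited source. Your outline does follow the strategy of that source: reduce to large integral surgeries via the monotonicity of the $L$-space condition, apply the spin$^{c}$-refined large surgery formula (the refinement of Theorem~\ref{cal}) to translate the $L$-space hypothesis into the statement that $H_{*}(A^{+}_{s})\cong HF^{+}(S^{3})$ for every $s$, and then recover the Alexander gradings $n_{i}$ and the Maslov gradings $\delta_{i}$ from the conjugation symmetry and routine grading bookkeeping. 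Those outer layers are correct.

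The genuine gap is the step you yourself flag as ``the main obstacle'': the claim that rank-one homology of every $A^{+}_{s}$, together with $H_{*}(CFK^{\infty})\cong\mathbb{F}[U,U^{-1}]$, \emph{forces} the reduced complex to be a staircase --- equivalently, that $\widehat{HFK}(S^{3},K,j)$ has rank at most one in every Alexander grading and that the supporting gradings alternate in the stated pattern. This is not a bookkeeping step; it is the entire mathematical content of the theorem, and you assert it rather than prove it. It is not a formal consequence of Lemma~\ref{reducing} plus the rank-one conditions: one must show that no other filtered chain homotopy type (for instance, a staircase with extra ``box'' summands, or a complex with several generators in one Alexander grading whose contributions cancel in every $H_{*}(A^{+}_{s})$) is compatible with all the constraints simultaneously. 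In \cite{MR2168576} this occupies the bulk of the proof and is carried out by a downward induction on the Alexander grading, playing off the rank-one condition on the homology of each subquotient $A_{s}$ against the two projection maps to the vertical and horizontal quotient complexes (each of which computes $\widehat{HF}(S^{3})$), thereby determining $\widehat{HFK}(K,s)$ from the already-determined groups in higher gradings. Without an argument of this kind your write-up is a plan, not a proof. A secondary, smaller issue: the statement is over $\mathbb{Z}$, while your cancellation argument runs over $\mathbb{F}=\mathbb{Z}/2\mathbb{Z}$, so even granting the staircase step you would only obtain the mod-$2$ version and would still need a universal-coefficients/torsion-freeness remark to recover the integral statement.
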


It is easy to see that $\delta_{i}\leq 0$ for any $-k\leq i \leq k$. Namely we have $\widehat{HFK}_{l}(S^{3}, K)=0$ for $l>0$. This fact implies the following lemma.
\begin{lemma}
\label{ruo}
 $H_{l}(\mathcal{F}(K, j))=0$ if $l>0$ for any $ j \in \mathbb{Z}$.
\end{lemma}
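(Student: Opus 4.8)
The plan is to induct on the Alexander grading $j$, using the fact established just above the lemma that the associated graded homology $\widehat{HFK}_{l}(S^{3}, K, j)$ vanishes for every $l>0$ and every $j$. The starting observation is that the subcomplexes $\mathcal{F}(K,j)\subset C\lbrace i=0\rbrace$ form an increasing filtration by \emph{genuine} subcomplexes, since the differential on $C\lbrace i=0\rbrace$ respects the Alexander filtration. Because $C\lbrace i=0\rbrace$ is finitely generated and the Alexander gradings of its generators are bounded below, we have $\mathcal{F}(K,j)=0$ for all sufficiently negative $j$; this furnishes the base case, in which $H_{l}(\mathcal{F}(K,j))=0$ trivially for every $l$.

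For the inductive step, I would invoke the short exact sequence of chain complexes
$$0\rightarrow \mathcal{F}(K,j-1)\rightarrow \mathcal{F}(K,j)\rightarrow \mathcal{F}(K,j)/\mathcal{F}(K,j-1)\rightarrow 0,$$
whose maps preserve the Maslov grading. The induced long exact sequence in homology contains, for each fixed $l$, the segment
$$H_{l}(\mathcal{F}(K,j-1))\rightarrow H_{l}(\mathcal{F}(K,j))\rightarrow H_{l}(\mathcal{F}(K,j)/\mathcal{F}(K,j-1)).$$
By definition the right-hand term equals $\widehat{HFK}_{l}(S^{3},K,j)$, which vanishes for $l>0$. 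Assuming inductively that $H_{l}(\mathcal{F}(K,j-1))=0$ for $l>0$, exactness at the middle term forces $H_{l}(\mathcal{F}(K,j))=0$ for $l>0$. Running the induction upward in $j$ then yields the vanishing for all $j\in\mathbb{Z}$, as claimed.

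I do not expect a genuine obstacle here: once the vanishing of $\widehat{HFK}$ in positive Maslov grading is in hand, the argument is purely homological. The only points that require care are verifying that $\mathcal{F}(K,j-1)$ is honestly a subcomplex of $\mathcal{F}(K,j)$ (so that the short exact sequence, and hence the grading-compatible long exact sequence, is valid) and that the filtration stabilizes at $0$ for $j\ll 0$, which supplies the base case. An equivalent formulation would read off the result from the spectral sequence of the filtered complex $\mathcal{F}(K,j)$, whose $E_{1}$-page is concentrated in non-positive Maslov gradings; but the inductive long-exact-sequence argument is more elementary and self-contained, so I would present it that way.
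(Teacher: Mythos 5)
Your proof is correct, but it takes a genuinely different route from the paper's. The paper disposes of the lemma in one line by iteratively applying Lemma~\ref{reducing}: working over the field $\mathbb{F}$, the complex $C\lbrace i=0\rbrace$ is filtered chain homotopy equivalent to a reduced model whose underlying group is $\widehat{HFK}(S^{3},K)$ with some induced differential $\bar{\partial}$; since that group is supported in Maslov gradings $\leq 0$, each filtration sublevel of the reduced model has no generators in positive grading, and the vanishing of $H_{l}(\mathcal{F}(K,j))$ for $l>0$ follows (this implicitly uses that a filtered chain homotopy equivalence restricts to an equivalence on each sublevel $\mathcal{F}(K,j)$). You instead induct on $j$ using the long exact sequence of
$$0\rightarrow \mathcal{F}(K,j-1)\rightarrow \mathcal{F}(K,j)\rightarrow \mathcal{F}(K,j)/\mathcal{F}(K,j-1)\rightarrow 0,$$
whose third term has homology equal, by definition, to $\widehat{HFK}_{l}(S^{3},K,j)$, with the base case supplied by finite generation of $C\lbrace i=0\rbrace$ (so $\mathcal{F}(K,j)=0$ for $j\ll 0$). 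Both arguments rest on the same input, the vanishing of $\widehat{HFK}_{l}(S^{3},K)$ for $l>0$ extracted from Ozsv{\'a}th and Szab{\'o}'s structure theorem for knots with $L$-space surgeries. What your approach buys: it is more elementary and self-contained, avoids the cancellation lemma, and in fact works over arbitrary coefficients rather than a field. What the paper's approach buys: the reduced filtered model of $C\lbrace i=0\rbrace$ is a stronger structural statement that the paper reuses immediately afterwards (in Lemma~\ref{lan} and the proof of Proposition~\ref{c}), so invoking Lemma~\ref{reducing} here costs nothing in context. Your verification of the two delicate points --- that each $\mathcal{F}(K,j-1)$ is honestly a subcomplex of $\mathcal{F}(K,j)$, and that the filtration stabilizes at $0$ for $j\ll 0$ --- is exactly what is needed for the long-exact-sequence argument to go through.
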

\begin{proof}
Since we are working in the field $\mathbb{F}:=\mathbb{Z}/2\mathbb{Z}$, by iteratively applying Lemma~\ref{reducing} we see that $\widehat{HFK}(S^{3}, K)$ can be endowed with a differential $\bar{\partial}$ such that $(\widehat{HFK}(S^{3}, K), \bar{\partial})$ is filtered chain homotopy equivalent to $(C\lbrace i=0 \rbrace, \partial )$. Then the lemma follows from the fact that $\widehat{HFK}_{l}(S^{3}, K)=0$ if $l>0$.
\end{proof}

Remember that $K_{D}$ denotes the positive clasped untwisted Whitehead double of $K$. Hedden in \cite[Theorem~1.2]{MR2372849} described the knot Floer homology of $K_{D}$ in terms of that of $K$.

\begin{theo}[Hedden \cite{MR2372849}]
\label{xi}
Let $K\subset S^{3}$ be a non-trivial knot admitting a positive integral $L$-space surgery. Suppose the Seifert genus of $K$ is $g$. Then
\begin{equation*}
\widehat{HFK}_{l}(S^{3}, K_{D}, j)=
\begin{cases}
\mathbb{F}^{-2}_{(1)}\oplus \mathbb{F}^{2g}_{(0)}\bigoplus_{m=-g}^{g} [H_{l-1}(\mathcal{F}(K, m))]^{2} & \text{  $j=1$,}\\
\mathbb{F}^{-4}_{(0)}\oplus \mathbb{F}^{4g-1}_{(-1)}\bigoplus_{m=-g}^{g} [H_{l}(\mathcal{F}(K, m))]^{4} & \text{  $j=0$,}\\
\mathbb{F}^{-2}_{(-1)}\oplus \mathbb{F}^{2g}_{(-2)}\bigoplus_{m=-g}^{g} [H_{l+1}(\mathcal{F}(K, m))]^{2} & \text{  $j=-1$,}\\
0 & otherwise.
\end{cases} 
\end{equation*} 
Here the term $\mathbb{F}_{(l)}^{k}$ contributes to $\widehat{HFK}_{l}(S^{3}, K_{D})$, and if $k<0$ we delete it and add $\mathbb{F}_{(l)}^{-k}$ to the left side of the equations. The Ozsv{\'a}th and Szab{\'o}'s $\tau$-invariant $\tau (K_{D})$ is $1$.

\end{theo}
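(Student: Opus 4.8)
The plan is to follow Hedden's satellite computation \cite{MR2372849}, realizing $K_{D}=D_{+}(K)$ as the satellite whose companion is $K$ and whose pattern is the positive-clasped untwisted double sitting in a solid torus $V$. First I would build a doubly-pointed Heegaard diagram for $(S^{3}, K_{D})$ out of a doubly-pointed Heegaard diagram for $(S^{3}, K)$ by the winding-region technique of Ozsv\'ath and Szab\'o \cite{MR2065507}: in a neighbourhood of the companion one replaces a standard piece by a region in which the Heegaard curve carrying the pattern winds around the companion, and one inserts the data of the clasp together with the two basepoints $w$ and $z$ that record the pattern. Because the positive-clasped double bounds a once-punctured torus, $K_{D}$ has Seifert genus one, so the whole of $\widehat{HFK}(S^{3}, K_{D})$ is supported in Alexander gradings $j\in\{-1,0,1\}$; this is why the statement only lists these three cases.

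Next I would identify the generators and their gradings. The intersection points split into two families. The generators lying in the winding region are in grading-shifted bijection with the generators of the subquotient complex $C\{i=0\}$ of $CFK^{\infty}(S^{3}, K)$, and tracking the Alexander filtration of $K$ shows that a generator at filtration level $m$ of $K$ can contribute to the three Alexander gradings of $K_{D}$ only for $-g\le m\le g$; this is the source of the direct sums $\bigoplus_{m=-g}^{g}$ in the statement. The remaining generators are local to the clasp and produce the finite-rank summands. I would compute the Maslov gradings using the grading-shift formula for the winding region, checking that a class of $H_{*}(\mathcal{F}(K,m))$ in homological degree $l$ lands in Maslov grading $l+1$, $l$, $l-1$ in Alexander gradings $1$, $0$, $-1$ respectively, and that the local clasp generators occupy the Maslov gradings $(1),(0),(-1)$ and $(0),(-1),(-2)$ recorded by the subscripts in the statement.

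Then I would compute the differential by counting holomorphic disks. Disks supported in the winding region reproduce the filtered differential of $K$, so that after taking homology each Alexander-graded piece is assembled from the filtered homologies $H_{*}(\mathcal{F}(K,m))$; the multiplicities $2$, $4$, $2$ appearing in the three cases come from the two strands of the clasp, which double the local contribution carried over the winding region and square its rank in the middle grading $j=0$. Disks crossing the clasp region cancel generators in pairs, and the clasp being positive fixes which cancellations occur; this is exactly what the bookkeeping terms with negative superscripts $\mathbb{F}^{-2}_{(\cdot)}$ and $\mathbb{F}^{-4}_{(\cdot)}$ encode, together with the surviving local ranks $\mathbb{F}^{2g}_{(\cdot)}$ and $\mathbb{F}^{4g-1}_{(\cdot)}$. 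Here I would invoke Lemma~\ref{ruo}, namely $H_{l}(\mathcal{F}(K,m))=0$ for $l>0$, which follows from the $L$-space structure of $K$ recorded in the preceding structure theorem, to guarantee that the surviving Maslov gradings are as listed and that no unwanted cancellation spoils the answer.

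Finally I would read off $\tau(K_{D})$. Since $K$ admits a positive integral $L$-space surgery, the structure theorem gives $\delta_{i}\le 0$ and $\tau(K)=g(K)\ge 1>0$. In Hedden's general computation $\tau(D_{+}(K,t))$ equals $1$ for $t<2\tau(K)$ and $0$ for $t\ge 2\tau(K)$; the untwisted case is $t=0$, and $0<2\tau(K)$ forces $\tau(K_{D})=1$. Concretely I would locate in the $j=1$ part a distinguished cycle in Maslov grading $1$ whose image generates $\widehat{HF}(S^{3})\cong\mathbb{F}$, and check that no cycle supported in Alexander filtration level $\le 0$ survives to $\widehat{HF}(S^{3})$, so that $\min\{\,j\mid \iota_{*}\colon H(\mathcal{F}(K_{D},j))\to\widehat{HF}(S^{3})\text{ is nontrivial}\,\}=1$. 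The main obstacle is the holomorphic disk count in the combined winding-and-clasp region together with the precise matching of the Maslov-grading shifts: producing the multiplicities $2$, $4$, $2$ and the cancellation bookkeeping (the negative-superscript terms) exactly as stated is the technical heart of the argument, while the organization by Alexander grading and the extraction of $\tau$ are then routine.
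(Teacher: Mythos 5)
You should first note a structural fact: the paper contains no proof of this statement at all. Theorem~\ref{xi} is imported, with the gradings specialized to the untwisted case $t=0$ and to companions admitting positive integral $L$-space surgeries, from Hedden's computation of the knot Floer homology of Whitehead doubles \cite{MR2372849}; the paper's entire ``proof'' is the citation. So your proposal can only be measured against Hedden's published argument. Against that benchmark, your outline points in the right direction on several counts: the satellite Heegaard diagram built from a diagram for $K$ with the doubling pattern in a solid torus, the support of $\widehat{HFK}(S^{3},K_{D})$ in Alexander gradings $j\in\{-1,0,1\}$ because $K_{D}$ has genus one, the restriction of the summation range to $-g\leq m\leq g$ (the filtered homologies stabilize outside this range), and the derivation of $\tau(K_{D})=1$ from $t=0<2\tau(K)=2g$, using that $\tau=g$ for $L$-space knots.

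As a proof, however, there is a genuine gap, which your last paragraph partly concedes: every step that makes the theorem true is asserted rather than established. Concretely: (a) the appearance of $H_{*}(\mathcal{F}(K,m))$ is not obtained by observing that ``disks supported in the winding region reproduce the filtered differential of $K$''; in Hedden's argument it passes through the large-$N$ surgery formula for the companion (the analogue of Theorem~\ref{cal}), applied one $\mathrm{spin}^{c}$-structure at a time, and this identification, together with its Maslov shifts, is the technical core that your sketch leaves untouched. (b) Your explanation of the multiplicities is wrong as stated: the factors $2$, $4$, $2$ count local intersection points of the two strands of the pattern with the meridian disc, distributed across the three Alexander gradings; nothing is ``squared,'' and the middle multiplicity $4=2\times 2$ comes from pairs of local generators, not from squaring a rank. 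Similarly, the negative-superscript terms $\mathbb{F}^{-2}_{(\cdot)}$, $\mathbb{F}^{-4}_{(\cdot)}$ record specific cancellations that must be computed from the differential, not inferred from positivity of the clasp. (c) In your extraction of $\tau(K_{D})$, the distinguished cycle whose image generates $\widehat{HF}(S^{3})\cong\mathbb{F}$ must have Maslov grading $0$, not $1$, since $\widehat{HF}(S^{3})$ is supported in grading zero and $\iota_{*}$ preserves grading (compare the cycle $\rho=[x,0,1]$ with $\operatorname{gr}(\rho)=0$ in Lemma~\ref{lan}). (d) Finally, invoking Hedden's general formula $\tau(D_{+}(K,t))=1$ for $t<2\tau(K)$ is circular here, since that statement is part of the very theorem being proved; if citation of \cite{MR2372849} is permitted, the whole theorem can simply be cited, which is exactly what the paper does.
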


By Lemma~\ref{reducing}, we see that up to $\mathbb{Z}\oplus \mathbb{Z}$-filtered chain homotopy equivalence, a basis for $\widehat{HFK}(S^{3}, K_{D})$ forms a basis for $CFK^{\infty}(S^{3}, K_{D})$ as an $\mathbb{F}[U, U^{-1}]$-module. From now on, we let $CFK^{\infty}(S^{3}, K_{D})$ denote the representative generated by a basis for $\widehat{HFK}(S^{3}, K_{D})$. Under this setting,
Lemma~\ref{ruo} and Theorem~\ref{xi} imply the following lemma.

\begin{lemma}
\label{lan}
The chain complex $CFK^{\infty}(S^{3}, K_{D})$ satisfies the following properties.
\begin{enumerate}
\item Any chain $[x, i, j]\in CFK^{\infty}_{-2}(S^{3}, K_{D})$ satisfies $i+j\geq -1$ or $(i, j)=(-1, -1)$.
\item There exists a cycle $\rho =[x, 0, 1]\in CFK^{\infty}_{0}(S^{3}, K_{D})$ that is homologous to a generator of $HF_{0}^{\infty}(S^{3})\cong \mathbb{F}$. 
\end{enumerate}
\end{lemma}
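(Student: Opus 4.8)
The plan is to prove Lemma~\ref{lan} by extracting structural information about the chain complex $CFK^{\infty}(S^{3}, K_{D})$ from the explicit description of $\widehat{HFK}(S^{3}, K_{D})$ in Theorem~\ref{xi}, combined with the vanishing statement of Lemma~\ref{ruo}. The key preliminary observation, recorded just before the lemma, is that a basis for $\widehat{HFK}(S^{3}, K_{D})$ may be taken as an $\mathbb{F}[U, U^{-1}]$-basis for $CFK^{\infty}(S^{3}, K_{D})$; under the identification $[x, i, j] = U^{-i}[x, 0, j-i]$ and the grading rule $\operatorname{gr}([x,i,j]) = \operatorname{gr}([x,0,j-i])$ together with the fact that multiplication by $U$ drops Maslov grading by $2$ and shifts the filtration level $(i,j) \mapsto (i-1, j-1)$, every generator in Maslov grading $-2$ arises from a $\widehat{HFK}$-generator by applying a suitable power of $U$. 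So I would first translate the two assertions into statements about which bigradings $(i,j)$ can support a Maslov-grading-$(-2)$ generator.

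For part~(i), the plan is to enumerate the generators of $\widehat{HFK}(S^{3}, K_{D})$ lying in each of the three nonzero Alexander gradings $j \in \{-1, 0, 1\}$ and read off their Maslov gradings from Theorem~\ref{xi}. A generator $[x, 0, j]$ of $\widehat{HFK}$ has $i+j$ value equal to $j$ at Maslov grading $\operatorname{gr}([x,0,j])$; applying $U^{k}$ produces $[x, -k, j-k]$ with Maslov grading $\operatorname{gr}([x,0,j]) - 2k$ and with $i + j$-sum equal to $j - 2k$. To land in Maslov grading $-2$ one must solve $\operatorname{gr}([x,0,j]) - 2k = -2$, i.e. $k = (\operatorname{gr} + 2)/2$, which forces the corresponding $i+j$-sum to be $j - (\operatorname{gr}+2)$. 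Here I would use Lemma~\ref{ruo}, which says $H_{l}(\mathcal{F}(K,m)) = 0$ for $l > 0$, to restrict the Maslov gradings of the $\widehat{HFK}$-generators appearing in each case: the summands $H_{l-1}, H_{l}, H_{l+1}$ force nonzero contributions only in nonpositive homological degree, which bounds the Maslov gradings from above and hence bounds $k$ from below. Running through the finitely many admissible $\widehat{HFK}$-generators in Theorem~\ref{xi} and tracking the resulting $(i,j)$ should show that the only way to reach Maslov grading $-2$ with $i+j < -1$ is through the distinguished generator in Alexander grading $-1$ at Maslov grading $-2$, whose $U$-translates give precisely $(i,j) = (-1,-1)$; every other contribution yields $i + j \geq -1$.

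For part~(ii), the plan is shorter: Theorem~\ref{xi} gives a generator in Alexander grading $j=1$ and Maslov grading $0$ (the $\mathbb{F}^{2g}_{(0)}$ term, together with the knowledge that $\tau(K_{D}) = 1$), and the $\tau$-invariant computation identifies an element of $H_{0}(\mathcal{F}(K_{D}, 1))$ that maps nontrivially to $\widehat{HF}(S^{3})$ under the inclusion-induced map. I would argue that such a class, when viewed inside $C\{i=0\}$ and then inside $CFK^{\infty}(S^{3}, K_{D})$, is a cycle $\rho = [x, 0, 1]$ representing a generator of $HF_{0}^{\infty}(S^{3}) \cong \mathbb{F}[U, U^{-1}]$ in degree $0$, using that forgetting the Alexander filtration recovers $HF^{\infty}(S^{3})$ and that $\tau = 1$ pins the filtration level of the surviving generator to $j = 1$. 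I expect the main obstacle to be part~(i): the bookkeeping across the three Alexander gradings, with the $H_{l \pm 1}(\mathcal{F}(K,m))$ summands of unknown individual ranks, requires care to confirm that \emph{no} stray generator in Maslov grading $-2$ escapes the region $\{i+j \geq -1\} \cup \{(-1,-1)\}$; the vanishing input from Lemma~\ref{ruo} is exactly what rules out the problematic high-Maslov-grading contributions, so the argument hinges on applying it uniformly to every summand.
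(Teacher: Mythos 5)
Your overall strategy---generate $CFK^{\infty}(S^{3}, K_{D})$ over $\mathbb{F}[U,U^{-1}]$ by a basis of $\widehat{HFK}(S^{3}, K_{D})$, bound the Maslov gradings in each Alexander grading $j\in\lbrace -1,0,1\rbrace$ via Theorem~\ref{xi} and Lemma~\ref{ruo}, and track positions of $U$-translates---is exactly the paper's strategy for part~(i), and carried out correctly it works. However, your stated conclusion of the enumeration is wrong in a way the enumeration itself would expose: the exceptional position $(-1,-1)$ \emph{cannot} come from a generator in Alexander grading $-1$ and Maslov grading $-2$. Multiplication by $U$ shifts $(i,j)\mapsto(i-1,j-1)$, so it preserves the difference $j-i$; translates of a generator with $j-i=-1$ never reach the diagonal $i=j$, and in Maslov grading $-2$ the only translate of such a generator is $[x,0,-1]$ itself, which satisfies $i+j=-1$ and so is not exceptional at all. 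The exception arises instead from the Maslov-grading-$0$ generators in Alexander grading $0$ (positions $(0,0)$): applying $U$ once lands them at $(-1,-1)$ in grading $-2$. (Also, bounding $\operatorname{gr}$ from \emph{above} bounds $k=(\operatorname{gr}+2)/2$ from above, not ``from below'' as you wrote; it is this upper bound on $k$ that yields the lower bound $i+j = j-2k \geq -1$.) These are repairable slips, since the computation, done honestly, gives the lemma.

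Part~(ii) contains the genuine gap. You write that the class detected by $\tau(K_{D})=1$, ``when viewed inside $C\lbrace i=0\rbrace$ and then inside $CFK^{\infty}(S^{3},K_{D})$, is a cycle $\rho=[x,0,1]$ representing a generator of $HF^{\infty}_{0}(S^{3})$''---but that is precisely the assertion to be proved, and it does not follow formally from $\tau(K_{D})=1$. Surviving in $H(C\lbrace i=0\rbrace)\cong\widehat{HF}(S^{3})$ only controls the component of $\partial\rho$ that stays in $i=0$; a priori $\partial\rho$ could have a nonzero component in $C\lbrace i<0\rbrace$, and even if $\partial\rho=0$ one must still rule out that $\rho$ bounds in $CFK^{\infty}(S^{3},K_{D})$. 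The paper's proof spends most of its length on exactly these two points: first, if $\partial\rho\neq 0$ then by the grading bounds $\partial\rho=[a,-1,0]$ has Maslov grading $-1$ and is a cycle in $C\lbrace i=-1\rbrace$; since $C\lbrace i=-1\rbrace$ contains no chain of Maslov grading $0$, this cycle would be non-trivial in $H(C\lbrace i=-1\rbrace)\cong\mathbb{F}$, which is supported in Maslov grading $-2$---a contradiction, so $\partial\rho=0$. Second, non-triviality of $[\rho]$ in $HFK^{\infty}(S^{3},K_{D})\cong\mathbb{F}[U,U^{-1}]$ is established via the reduction Lemma~\ref{reducing}, using that the homotopy equivalence fixes $\rho$ because $\partial\rho=0$. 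Without an argument of this kind, your part~(ii) is a restatement of the claim rather than a proof of it.
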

\begin{proof}
(i) From Lemma~\ref{ruo} and Theorem~\ref{xi}, we see that any element $[x, 0, 1]$ (respectively $[y, 0, 0]$, $[z, 0, -1]$) in $\widehat{HFK}(S^{3}, K_{D})$ has Maslov grading less than or equal to $1$ (respectively $0$, $-1$). Since the variable $U$ carries Maslov grading $-2$ and $\mathbb{Z}\oplus \mathbb{Z}$-filtration $(-1,-1)$, we see that any cycle $[x, i, j]\in CFK^{\infty}_{-2}(S^{3}, K_{D})$ satisfies $i+j\geq -1$ or $(i, j)=(-1,-1)$.

(ii) Recall that $\widehat{HF}(S^{3})\cong H(C\lbrace i=0\rbrace)\cong H(\widehat{HFK}(S^{3}, K_{D}), \bar{\partial})\cong \mathbb{F}$. Since Ozsv{\'a}th and Szab{\'o}'s $\tau$-invariant $\tau (K_{D})=1$, we see that any element $[y, 0, 0]$ and $[z, 0, -1]$ in $\widehat{HFK}(S^{3}, K_{D})$ are zero in $H(\widehat{HFK}(S^{3}, K_{D}), \bar{\partial})$, and that there exists an element $\rho =[x, 0, 1]\in \widehat{HFK}(S^{3}, K_{D})$ generating $\widehat{HF}(S^{3})$ and therefore having Maslov grading zero. 

Now we show that $\rho$, as a chain in $CFK^{\infty}(S^{3}, K_{D})$, is homologous to a generator of $HF_{0}^{\infty}(S^{3})\cong \mathbb{F}$. Since we are working on the field $\mathbb{F}=\mathbb{Z}/2\mathbb{Z}$, it suffices to show that $\rho$ is a non-trivial cycle in $CFK^{\infty}(S^{3}, K_{D})$.

First we show that $\partial \rho =0$ in $CFK^{\infty}(S^{3}, K_{D})$. Otherwise, there exists a chain $[a, -1, 0]$ such that $\partial \rho =[a, -1, 0]$. Since $\partial^{2} \rho =\partial ([a, -1, 0])=0$, we see that $[a, -1, 0]$ is a cycle in $C\lbrace i=-1\rbrace$ with Maslov grading $-1$. Note that there is no chain of Maslov grading $0$ in $C\lbrace i=-1\rbrace$. Therefore $[a, -1, 0]$ is non-trivial in $H(C\lbrace i=-1\rbrace)$. On the other hand, it is known that $H(C\lbrace i=-1\rbrace)\cong \mathbb{F}$ is supported on Maslov grading $-2$. This contradiction implies that $\partial \rho =0$ in $CFK^{\infty}(S^{3}, K_{D})$.

By Lemma~\ref{reducing}, we can endow $\mathbb{F}[U, U^{-1}]$, the $\mathbb{F}[U, U^{-1}]$-module freely generated by $\rho$, a differential $\bar{\partial}$ such that $(CFK^{\infty}(S^{3}, K_{D}), \partial)$ is chain homotopy equivalent to $(\mathbb{F}[U, U^{-1}], \bar{\partial})$. Note that $HFK^{\infty}(S^{3}, K_{D})\cong \mathbb{F}[U, U^{-1}]$. It is easy to see that the differential $\bar{\partial}$ is trivial, which implies that $\rho$ is non-trivial in $(\mathbb{F}[U, U^{-1}], \bar{\partial})$. On the other hand, the homotopy equivalence $\iota : \mathbb{F}[U, U^{-1}]\rightarrow CFK^{\infty}(S^{3}, K_{D})$ sends $\rho$ to itself in $CFK^{\infty}(S^{3}, K_{D})$, since $\partial \rho =0$ in $CFK^{\infty}(S^{3}, K_{D})$. Therefore $\rho$, as a chain in $CFK^{\infty}(S^{3}, K_{D})$, is a non-trivial cycle. This completes the proof.
\end{proof}

\begin{proof}[Proof of Proposition~\ref{c}.]
Consider the generator $\rho ':=U\rho=[x, -1, 0]\in CFK^{\infty}(S^{3}, K_{D})$ where $\rho$ is the cycle in the second statement of Lemma~\ref{lan}. Observe that $\rho'\in C\lbrace \max (i, j)\geq 0\rbrace$. By equivariance $\rho'$ is in the image of $U^{i}$ for all $i\geq 0$. Moreover Theorem~\ref{cal} allows us to view $\rho'$ as a homology class in $HF^{+}(S^{3}_{r}(K_{D}), s_{0})$.

Note that $CFK^{\infty}(S^{3}, K_{D})$ is finitely generated as an $\mathbb{F}[U, U^{-1}]$-module and that multiplying $U^{-1}$ increases the Maslov grading by $2$. Therefore for any fixed Maslov grading $l\gg 0$, it holds that $C_{l}\lbrace \max (i, j)\geq 0\rbrace \cong CFK^{\infty}_{l}(S^{3}, K_{D})$. Hence $[U^{-k}\rho']$ is a non-zero homology class in either group for sufficiently large $k$. It follows that $$\min \lbrace \operatorname{gr}(U^{k}\rho')\vert [U^{k}\rho']\neq 0\in H( C\lbrace \max (i, j)\geq 0\rbrace ) \rbrace$$ is well-defined and equals $d(S^{3}_{r}(K_{D}), s_{0})-(r-1)/4$.

Remember that $\rho'\in C\lbrace \max (i, j)\geq 0\rbrace$, while $U^{k}\rho'\not \in C\lbrace \max (i, j)\geq 0\rbrace$ for any $k\geq 1$. In the following paragraphs, we show that $[\rho']\neq 0\in H(C\lbrace \max (i, j)\geq 0\rbrace )$. If this is true, we see that $$d(S^{3}_{r}(K_{D}), s_{0})=d(S^{3}_{r}(K_{D}), s_{0})-(r-1)/4=\operatorname{gr}(\rho')=-2.$$

We will abuse the notation ``$\partial$" to denote the differential of any chain complex under consideration. Suppose $[\rho']=0\in H(C\lbrace \max (i, j)\geq 0\rbrace)$. Then there exists a chain $\xi \in C\lbrace \max (i, j)\geq 0\rbrace$ such that $\partial \xi=\rho'$. But since $[\rho']\neq 0 $ in $HFK^{\infty}(S^{3}, K_{D})$, there exists a chain $\eta \in C\lbrace \max (i, j)< 0\rbrace$ such that $\partial \xi=\rho'+\eta$ in $CFK^{\infty}(S^{3}, K_{D})$. Since $\operatorname{gr}(\rho')=-2$, so $\operatorname{gr}(\eta)=-2$ as well. By the first statement of Lemma~\ref{lan}, the chain $\eta$ must have the form $\eta=[y, -1,-1]$. 

On the other hand $\partial^{2}(\xi)=\partial (\rho'+\eta)=0$ in $CFK^{\infty}(S^{3}, K_{D})$. Since $\partial \rho'=0$ (remember that $\rho'$ is a cycle in $CFK^{\infty}(S^{3}, K_{D})$), we see that $\partial \eta =0$ as well in $CFK^{\infty}(S^{3}, K_{D})$.
Since $C\lbrace i=-1\rbrace \cong UC\lbrace i=0\rbrace$ and $U^{-1}\eta=[y, 0, 0]$ is trivial in $H(C\lbrace i=0\rbrace)$, therefore $[\eta]=0\in H(C\lbrace i=-1\rbrace)$. Thus there exists a chain $[\zeta, -1, 0] \in C\lbrace i=-1\rbrace$ such that $\partial ( [\zeta, -1, 0]) =\eta$ in $C\lbrace i=-1\rbrace$, and the Maslov grading of $[\zeta, -1, 0] $ is $-1$.

Note that the differential decreases the Maslov grading by $1$ and preserves the $\mathbb{Z}\oplus\mathbb{Z}$-filtration in $CFK^{\infty}(S^{3}, K_{D})$. We see that $\partial \zeta =\eta$ as well in $CFK^{\infty}(S^{3}, K_{D})$. Therefore $\partial (\xi-\zeta)=\rho'$ in $CFK^{\infty}(S^{3}, K_{D})$, which means $[\rho']=0\in HFK^{\infty}(S^{3}, K_{D})$. This contradicts our choice of $\rho'$. Therefore our initial assumption that $[\rho']=0\in H(C\lbrace \max (i, j)\geq 0\rbrace)$ does not hold. This completes the proof of Proposition~\ref{c}.
\end{proof}

\end{appendix}

\end{document}